\def\m{\bf m}
\def\n{\bf n}
\def \det{\operatorname{det}}
\def \adj{\operatorname{adj}}
\def\m{\bold m}
\def \ord{\operatorname{ord}}
\def \Sym{\operatorname{Sym}}
\def \rk{\operatorname{rk}}
\def \Hom{\operatorname{Hom}_R}
\newtheorem{theorem}{Theorem}[section]
\newtheorem{lemma}[theorem]{Lemma}
\newtheorem{corollary}[theorem]{Corollary}
\newtheorem{proposition}[theorem]{Proposition}
\theoremstyle{definition}
\newtheorem{definition}[theorem]{Definition}
\theoremstyle{remark}
\newtheorem{remark}[theorem]{Remark}
\numberwithin{equation}{section}
\begin{document}

\title[Lengths and multiplicities of integrally closed modules]{Lengths and multiplicities of integrally closed modules over a two-dimensional regular local ring}

\author{Vijay Kodiyalam}
\address{The Institute of Mathematical Sciences, Taramani, Chennai 600 113, India}
\email{vijay@imsc.res.in}

\author{Radha Mohan}
\address{St. Stephen's College, Delhi 110 007, India}
\email{radhamohan@vsnl.com}
\thanks{The second named author was supported by the Institute of Mathematical 
Sciences during her visits there.}

\subjclass[2010]{Primary 13B21, 13B22, 13H05, 13H15}

\date{}

\dedicatory{Dedicated to Bill Heinzer on his seventy fifth birthday}

\begin{abstract}

Let $(R,{\bf m})$ be a two-dimensional regular local ring with infinite residue 
field. We prove an analogue of the Hoskin-Deligne length formula for a finitely generated,
torsion-free, integrally closed $R$-module $M$. As a consequence, we get
a formula for the Buchsbaum-Rim multiplicity of $F/M$, where 
$F = M^{**}$.
\end{abstract}

\maketitle

\section{Introduction}

The theory of integrally closed or complete ideals in a two-dimensional
regular local ring was founded by Zariski in \cite{Zrs1938}. Since,
then this theory has received a good deal of attention and has been refined
and generalized. The first named author generalized this theory to finitely
generated, torsion-free, integrally closed modules in \cite{Kdy1993, Kdy1995}. While the structural cornerstones
of this theory are Zariski's product  and unique factorization
theorems, the basic numerical result here is the Hoskin-Deligne
length formula.

This formula has several proofs beginning with the one by Hoskin in \cite{Hsk1956}, 
 through proofs by Deligne in \cite{Dlg1973}, by Rees in \cite{Res1981}, by Lipman in \cite{Lpm1987}, by Johnston and Verma in
\cite{JhnVrm1992}, and
 by the first named author in \cite{Kdy1993} (which is based on techniques of \cite{Lpm1987} and \cite{Hnk1989}), to the one in \cite{DbrLrd2002}.

In this paper we obtain an analogue of the Hoskin-Deligne formula
for finitely generated, torsion-free, integrally closed modules over a 
two-dimensional regular local ring.
A consequence of this is a formula for the Buchsbaum-Rim multiplicity
of a certain finite length module associated to an integrally closed module.

We now summarise the rest of the paper. In \S 2, we collect various facts and results about
integrally closed modules and reductions from \cite{Res1987}, about integrally closed
modules and their transforms over two-dimensional regular local rings from \cite{Kdy1995} and
about Buchsbaum-Rim multiplicities from \cite{BchRim1964}.
In \S3, we prove the
analogue of the Hoskin-Deligne formula for integrally closed modules which expresses their
colength in terms of those of modules contracted from the order valuation rings of various
quadratic transforms of the base ring. In the final \S4 we  apply our analogue of the Hoskin-Deligne formula to prove a Buchsbaum-Rim
multiplicity formula for such modules.

\section{Preliminaries}

\subsection{Integral Closures and Reductions of Modules}

We review the notions of integral closures and reductions for torsion-free
modules over arbitrary Noetherian domains as developed by Rees \cite{Res1987}.

Throughout this subsection, $R$ will be a Noetherian domain with field of fractions $K$ and $M$
will be a finitely generated, torsion-free $R$-module. 
We denote its rank by $rk_R(M)$.
By $M_K$ we denote
the  $rk_R(M)$-dimensional $K$-vector space $M \otimes_R K$. If $N$ is a
submodule of $M$ then $N_K$ is naturally identified with a subspace of $M_K$.

Any ring between $R$  and $K$ is said to be a birational
overring of $R$. For any such birational overring $S$ of $R$, we let $MS$ denote
the $S$-submodule of $M_K$ generated by $M$. There is a canonical $R$-module
homomorphism from $M \otimes_R S$ onto $MS$ with kernel being the submodule
of $S$-torsion (equivalently $R$-torsion) elements. Hence, $M \otimes_R S$ modulo $S$-torsion and
$MS$ are isomorphic as $S$-modules.

Let $S(M)$ denote the image of the symmetric algebra ${\Sym}^R (M)$ in the 
algebra ${\Sym}^K (M_K)$ under the canonical map. As an $R$-algebra $S(M)$
is ${\Sym}^R (M)$ modulo its ideal of $R$-torsion elements. Let $S_n(M)$ 
denoted the $n$-th graded component of the positively graded $R$-algebra
$S(M)$.

Similarly, let $E(M)$ denote the image of the exterior algebra $\wedge^R (M)$
in the algebra $\wedge^K (M_K)$ under the canonical map. As an $R$-algebra $E(M)$
is $\wedge^R (M)$ modulo its ideal of $R$-torsion elements. Let $E_n(M)$ 
denoted the $n$-th graded component of the positively graded $R$-algebra
$E(M)$.  Observe that if $\rk (M) = r$ then ${\wedge}_r (M)$ is an $R$-module
of rank $1$ contained in $K$ and hence is isomorphic to an ideal of $R$.
If $N$ is a submodule of $M$ of the same rank, then $E_r(N)$ is contained
in $E_r(M)$ and fixing an isomorphism of $E_r(M)$ with an ideal of $R$,
we can identify $E_r(N)$ as a subideal.

The following fundamental result of Rees - see Theorems 1.2 and 1.5 of \cite{Res1987} - is the basic theorem in integral closures and reductions
of modules.

\begin{theorem}\label{basic} Let $R$ be a Noetherian domain with field of fractions $K$ and let $M$
be a finitely generated, torsion-free $R$-module of rank $r$. For an
element $v \in M_K$, the following conditions are equivalent:
\begin{itemize}
\item{Valuative criterion:} $v \in MV$ for every discrete valuation ring $V$ of
$K$ containing $R$.
\item{Equational criterion:} The element $v \in M_K = {\Sym}_1^K (M_K)$ is integral over $S(M)$.
\item{Determinantal criterion:} Under some (any)  isomorphism of $E_r(M + Rv)$ with an ideal $I$ of $R$,
the subideal $J$ corresponding to $E_r(M)$ is a reduction of $I$.
\end{itemize}
\end{theorem}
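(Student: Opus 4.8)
The plan is to prove the implications cyclically: valuative $\Rightarrow$ equational $\Rightarrow$ determinantal $\Rightarrow$ valuative. The engine throughout is the reduction to the rank-one (ideal) case via the exterior power $E_r$, together with the classical valuative criterion for integral dependence of ideals over a Noetherian domain.

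First I would establish valuative $\Rightarrow$ determinantal, since this is the most geometric step. Set $M' = M + Rv$ and fix an isomorphism $E_r(M') \cong I \subseteq R$, under which $E_r(M)$ becomes $J \subseteq I$. The key linear-algebra observation is that wedging with a fixed tuple behaves well: for any DVR $V \supseteq R$, one has $E_r(M')V = (E_r(M))V$ precisely when $M'V = MV$, because over the DVR $V$ (a PID) both modules are free and $v \in MV$ forces the top exterior powers to agree. Thus if $v \in MV$ for every such $V$, then $IV = JV$ for every DVR $V$ of $K$ containing $R$, which by the valuative criterion for integral closure of ideals (e.g. as in Rees's work, valid over any Noetherian domain) says $I \subseteq \overline{J}$, i.e. $J$ is a reduction of $I$. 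The main technical care here is handling the passage between $M \otimes_R V$ modulo torsion and $MV$, and checking that "$v \in MV$ for all DVRs" is exactly the hypothesis needed — but the localization-at-DVR argument makes everything principal, so this is routine once set up.

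Next, determinantal $\Rightarrow$ equational. Assuming $J$ is a reduction of $I = E_r(M + Rv)$, I would translate the integral equation for the generator of $I$ over the Rees algebra of $J$ back into an integral equation for $v$ over $S(M)$. Concretely, an element $w \in E_r(M')$ integral over the subalgebra generated by $E_r(M)$ gives, after expressing $w$ in terms of $v$ and a basis of $M$, a monic polynomial relation; one then checks that wedging is an injective enough operation on the relevant graded pieces (again because $\wedge^K$ of a vector space is just exterior algebra and the rank is $r$) to pull the relation down to $\Sym$. Alternatively — and this may be cleaner — I would cite the standard equivalence (valid over Noetherian domains) between "$v$ integral over $S(M)$" and the determinantal/ideal condition, which is itself essentially Rees's content; the paper may well just invoke Theorems 1.2 and 1.5 of \cite{Res1987} for precisely this equivalence rather than reprove it.

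Finally, equational $\Rightarrow$ valuative is the formal direction: if $v$ satisfies a monic relation over $S(M)$, then applying any DVR $V \supseteq R$ and using that $S(M)V$ is generated by $MV$, one gets that $v$ is integral over a finitely generated $V$-module, hence $v \in MV$ since $V$ is integrally closed and $MV$ is a finitely generated module over the normal domain $V$ (so its integral closure in $M_K$ is itself). I expect the main obstacle to be the determinantal $\Rightarrow$ equational step, where one must descend an integral relation through the exterior power functor without losing monicity; the cleanest route is likely to reduce to the ideal case and quote the classical theory of reductions of ideals, which is presumably the intent given the explicit citation to \cite{Res1987}.
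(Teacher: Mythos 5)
You should first note that the paper contains no proof of this statement to compare against: it is quoted verbatim as Rees's theorem, with an explicit pointer to Theorems 1.2 and 1.5 of \cite{Res1987}, so the hedge at the end of your proposal (``the paper may well just invoke Rees'') is exactly what happens. Judged on its own, your sketch handles the two easy arrows correctly. For valuative $\Rightarrow$ determinantal, localizing at a DVR $V$ makes $MV \subseteq (M+Rv)V$ free of the same rank $r$, the elementary divisor argument shows the top exterior powers agree iff the modules agree, and the valuative criterion for integral closure of ideals over a Noetherian domain then gives $I \subseteq \overline{J}$; note the same elementary divisor argument run backwards gives determinantal $\Rightarrow$ valuative for free. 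Equational $\Rightarrow$ valuative is also fine: over $V$ the module $MV$ is free, $\Sym_V(MV)$ is a polynomial ring over the normal domain $V$, hence integrally closed in its fraction field, and a degree-one element integral over it must lie in $MV$.

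The genuine gap is determinantal $\Rightarrow$ equational, which is precisely the substantive content of Rees's theorem. Your proposed mechanism --- take an integral equation for a generator of $I = E_r(M+Rv)$ over the Rees algebra of $J = E_r(M)$ and ``pull the relation down to $\Sym$'' because ``wedging is injective enough'' --- does not work as stated: there is no algebra homomorphism from the Rees algebra of the rank-one module $E_r(M)$ back to $S(M)$ along which a monic relation could descend, and wedging $v$ against a fixed $(r-1)$-tuple of elements of $M$ is not multiplicative, so monicity is destroyed rather than transported. The standard proofs of this implication (equivalently, of valuative $\Rightarrow$ equational) instead work directly with the Rees algebra $S(M)$: one uses that the integral closure of the Noetherian domain $S(M)$ is a Krull domain (Mori--Nagata), intersects the resulting divisorial valuations with $K$, and shows that failure of $v$ to be integral over $S(M)$ is detected by a discrete valuation ring of $K$ containing $R$ with $v \notin MV$. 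Without such an argument your cycle of implications is not closed except by citation --- which is defensible, and is indeed all the paper itself does, but it should be stated as a citation rather than dressed as a proof step.
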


\begin{definition} With notation as above, the element $v \in M_K$
is said to be integral over $M$ if the equivalent conditions of Theorem \ref{basic} hold. The integral closure of $M$, denoted $\overline{M}$, is
the set of all elements of $M_K$ that are integral over $M$. The module
$M$ is integrally closed if $\overline{M} = M$.
A submodule $N$ is a reduction of $M$ if
$M \subset \overline{N}$.
Over a local domain, a minimal reduction is one for which the minimal number of generators
is minimal among all reductions.
\end{definition}

It is clear that $M \subseteq \overline{M} \subseteq M_K$. In addition,
Rees shows in \cite{Res1987} that if $R$ is a Noetherian normal domain then
there are natural inclusions
$\overline{M} \subseteq M^{**} \subseteq M_K$ where $(-)^{*}$ denotes the
functor $\Hom (-,R)$.

The following result of Rees - see Lemma 2.1 of \cite{Res1987} - generalizes to modules the
theorem that any ${\bf m}$-primary ideal of a $d$-dimensional, Noetherian local
ring $(R, {\bf m})$ with infinite residue field has a $d$-generated reduction
where $d > 0$.

\begin{theorem} Let $R$ be a $d$-dimensional, Noetherian local
domain with infinite residue field and $M$ be a non-free, finitely generated,
torsion-free $R$-module. Then $M$ has a minimal reduction which is generated
by at most $\rk (M) + d -1$ elements. Further, a minimal generating set of a minimal
reduction of $M$ forms part of a minimal generating set for $M$. In particular, when
$d = 2$, $M$ has a $\rk (M) + 1$ generated minimal reduction.
\end{theorem}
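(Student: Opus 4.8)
The plan is to work with the symmetric algebra $S(M) = \bigoplus_{n\ge 0} S_n(M)$ and its fibre cone $\mathcal F(M) := S(M)/{\bf m}S(M) = \bigoplus_{n\ge 0} S_n(M)/{\bf m}S_n(M)$, a standard graded algebra over the infinite residue field $k = R/{\bf m}$, with $\mathcal F(M)_0 = k$ and $\mathcal F(M)_1 = M/{\bf m}M$; put $r = \rk(M)$. The first step is the dictionary that, for a submodule $N \subseteq M$ of rank $r$, the following are equivalent: (a) $N$ is a reduction of $M$; (b) $S(M)$ is a finite $S(N)$-module; (c) $\mathcal F(M)$ is a finite module over the $k$-subalgebra of $\mathcal F(M)$ generated by the image of $N$ in $M/{\bf m}M$. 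For (a)$\,\Leftrightarrow\,$(b) one uses the equational criterion of Theorem \ref{basic}: $N$ reduces $M$ precisely when every $v \in M = S_1(M)$ is integral over $S(N)$, and since $S(M)$ is generated as an $S(N)$-algebra by finitely many such $v$, this is exactly the statement that $S(M)$ is $S(N)$-finite. For (b)$\,\Leftrightarrow\,$(c), one implication is reduction modulo ${\bf m}$; conversely, if (c) holds then $S(M) = L + {\bf m}S(M)$ for some finitely generated graded $S(N)$-submodule $L \subseteq S(M)$, so $S_n(M) = L_n + {\bf m}S_n(M)$ for every $n$, and since each $S_n(M)$ is a finitely generated $R$-module, Nakayama gives $S_n(M) = L_n$, i.e. $S(M) = L$ is $S(N)$-finite.

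The second step records the bound on the analytic spread $\ell(M) := \dim \mathcal F(M)$, namely $\ell(M) \le d + r - 1$. By construction $S(M)$ is a subring of $\Sym^K(M_K) \cong K[T_1,\dots,T_r]$, hence a domain, finitely generated as an $R$-algebra, whose localization at $R\setminus\{0\}$ is all of $K[T_1,\dots,T_r]$; by the dimension theory of symmetric algebras of modules this gives $\dim S(M) = d + r$. As $d \ge 1$, the ideal ${\bf m}S(M)$ of the domain $S(M)$ is nonzero, so $\mathcal F(M) = S(M)/{\bf m}S(M)$ is a proper quotient and $\ell(M) \le d + r - 1$. (Also $\ell(M) \ge 1$: each $S_n(M)$ is nonzero, its localization at $R\setminus\{0\}$ being the degree-$n$ part of $\Sym^K(M_K)$, so $\mathcal F(M)_n \ne 0$ for all $n$ by Nakayama and $\mathcal F(M)$ is not Artinian.)

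The third step produces the reduction. Since $k$ is infinite and $\mathcal F(M)$ is standard graded of dimension $\ell := \ell(M) \ge 1$, graded Noether normalization yields $\ell$ linear forms $\bar v_1,\dots,\bar v_\ell \in \mathcal F(M)_1 = M/{\bf m}M$ that form a homogeneous system of parameters; a homogeneous system of parameters consisting of $\ell$ linear forms is automatically linearly independent over $k$. Lift them to $v_1,\dots,v_\ell \in M$; by Nakayama these are part of a minimal generating set of $M$. With $N := Rv_1 + \dots + Rv_\ell$, the $k$-subalgebra of $\mathcal F(M)$ generated by the image of $N$ is $k[\bar v_1,\dots,\bar v_\ell]$, over which $\mathcal F(M)$ is finite, so by the dictionary $N$ is a reduction of $M$; it is minimally generated by $\ell \le d+r-1$ elements which extend to a minimal generating set of $M$. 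In particular every minimal reduction of $M$ is generated by at most $d+r-1$ elements, which for $d = 2$ is $\rk(M)+1$.

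Finally, to control a minimal generating set of a minimal reduction: if $N'\subseteq M$ is any reduction with minimal generating set $x_1,\dots,x_s$, then by (c) the algebra $\mathcal F(M)$ is finite over $k[\bar x_1,\dots,\bar x_s]$, so $s \ge \dim k[\bar x_1,\dots,\bar x_s] \ge \dim \mathcal F(M) = \ell$; hence $\ell$ is the least number of generators of any reduction, and a minimal reduction $N'$ has $s = \ell$. For such an $N'$, the $k$-algebra $k[\bar x_1,\dots,\bar x_\ell]$ has dimension $\ell$ but is generated by $\ell$ elements, so it is a polynomial ring, whence $\bar x_1,\dots,\bar x_\ell$ are algebraically---in particular $k$-linearly---independent in $M/{\bf m}M$; therefore $x_1,\dots,x_\ell$ form part of a minimal generating set of $M$. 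I expect the main obstacle to be the dictionary of the first step---especially the degreewise Nakayama argument for (c)$\,\Rightarrow\,$(b) and the precise use of the equational criterion---together with nailing down the analytic-spread bound $\ell(M)\le d+r-1$; with these in place, graded Noether normalization over the infinite residue field finishes the argument.
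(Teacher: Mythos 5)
The paper gives no proof of this statement at all---it is quoted from Rees (Lemma 2.1 of \cite{Res1987})---so there is no internal argument to compare yours against; what you have written is the standard fibre-cone/analytic-spread proof, and it is correct. The dictionary between reductions $N\subseteq M$ and module-finiteness of $S(M)$ over $S(N)$ (equational criterion plus the degreewise Nakayama step), the bound $\ell(M)\le d+\rk(M)-1$, the use of graded Noether normalization over the infinite residue field, and the final polynomial-subring observation that handles the ``part of a minimal generating set'' clause for an arbitrary minimal reduction are all sound. Two points should be made explicit. First, you only need the inequality $\dim S(M)\le d+\rk(M)$, and that is exactly the dimension inequality for a finitely generated extension domain of a Noetherian domain (see \cite{Mts1986}, Theorem 15.5), applied to $R\subseteq S(M)$, whose transcendence degree over $R$ is $\rk(M)$; the asserted equality is true but unnecessary, and appealing vaguely to ``the dimension theory of symmetric algebras'' risks importing catenarity hypotheses you do not need. (Your passage from there to $\dim\mathcal{F}(M)\le\dim S(M)-1$ is fine, since $\m S(M)$ is a nonzero ideal of a domain, so every prime containing it has height at least one.) Second, you apply the dictionary to $N=Rv_1+\cdots+Rv_\ell$, but the dictionary was formulated for submodules of rank $\rk(M)$, and a priori $N$ need not have full rank. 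This is easily repaired: your Nakayama argument for (c)$\Rightarrow$(b) never uses the rank, and once $S(M)$ is finite over $S(N)$, localizing at $R\setminus\{0\}$ shows that $\Sym^K(M_K)$ is finite over the $K$-subalgebra generated by $N_K$, which forces $N_K=M_K$; only then does the equational criterion for $N$ apply to every $v\in M$ and give $M\subseteq\overline{N}$. With these two remarks inserted the proof is complete, and for $d=2$ it yields a minimal reduction with at most $\rk(M)+1$ generators extending to a minimal generating set of $M$, which is the form in which the paper uses the result.
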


\subsection{Contracted Modules and Module Transforms}\label{Kdysum}

In this subsection, we summarise the results of \cite{Kdy1995} that we will use in the sequel.
Throughout this subsection, $R$ will be a two-dimensional regular local ring with maximal
ideal ${\bf m}$, infinite
residue field $k$ and field of fractions $K$.
The discrete valuation determined by the powers of ${\bf m}$ is denoted $ord_R(\cdot)$
and the associated valuation ring is denoted by $V_R$ or simply by $V$.

Throughout, $M$ be a finitely generated, torsion-free $R$-module and the notations
$\lambda_R(\cdot)$ and $\nu_R(\cdot)$ will denote respectively the length and minimal number of
generators functions on $R$-modules.
We will reserve $F$ to stand for 
the double dual $M^{**}$. It is a fact that $F$ is free (of rank $rk_R(M)$) and canonically
contains $M$ with quotient of finite length. Further, these properties characterise $M^{**}$ upto unique
isomorphism (restricting to the identity on $M$).

Let $G$ be any free module containing $M$ and of the same rank as $M$.
Choose a basis for $G$ and a minimal generating set for $M$ and
consider the matrix expressing this set of generators in terms of the chosen
basis of $G$. Considering the elements of $G$ as column vectors we get a
$\rk_R(M) \times \nu_R(M)$ representing matrix for $M$.
The ideal of
maximal minors, i.e., $\rk (M)$-sized minors, of this matrix is denoted $I_G(M)$.
It is easy to see that $I_G(M)$ is independent of the choices made and is
an invariant of the imbedding $M \subseteq G$. 
If $G=F=M^{**}$, then we will write $I(M)$ for $I_G(M)$.
If $M$ is a free module, $I(M) = R$ and if
$M$ is non-free then it follows from the 
fact that $F/M$ is of finite length that $I(M)$ is ${\bf m}$-primary.
We define the order of $M$ denoted by $ord_R(M)$ to be $ord_R(I(M))$.

We will also have occasion to use the following simple lemma.

\begin{lemma}\label{prinfree}
Let $R$ be a  Noetherian local domain, $G$ be a free module of finite rank
and $M \subseteq G$ a submodule of rank equal to $rk_R(G)$. Then $M$ is free
iff $I_G(M)$ is principal.
\end{lemma}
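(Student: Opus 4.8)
The plan is to prove both implications directly from the structure of the representing matrix. Fix a basis $e_1,\dots,e_n$ of $G$ (so $n = rk_R(G)$) and a minimal generating set $m_1,\dots,m_s$ of $M$, and let $A$ be the $n\times s$ matrix whose columns express the $m_j$ in terms of the $e_i$; by definition $I_G(M)$ is the ideal generated by the $n\times n$ minors of $A$. Note $s \ge n$ since $M$ has rank $n$.

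For the forward implication, suppose $M$ is free. Then $M$ has a generating set of exactly $n$ elements, and since minimal generating sets of a finitely generated module over a local ring all have the same cardinality, $s = n$ and $A$ is square. Thus $I_G(M) = (\det A)$ is principal, generated by a single element (which is nonzero since $M$ has full rank $n$, so the columns of $A$ are $K$-linearly independent).

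For the converse, suppose $I_G(M)$ is principal, say $I_G(M) = (d)$ with $d \ne 0$ (nonzero because $M \subseteq G$ has the same rank as $G$, so some maximal minor is nonzero). The strategy is to use the fact that principality of the ideal of maximal minors forces a cancellation: after a change of the minimal generating set of $M$ (equivalently, multiplying $A$ on the right by an invertible matrix over $R$), one can arrange that $d$ divides an entire column, and then that column is redundant modulo the others up to the unit adjustments — iterating this reduces $s$ to $n$. Concretely, I would argue that if $s > n$ then the maximal minors, all being multiples of $d$, satisfy enough relations (the Plücker-type syzygies, or simply the observation that $\operatorname{ann}(G/M)$ behaves well) to contradict minimality of the generating set; alternatively, invoke that $I_G(M)$ principal means the Fitting ideal $\operatorname{Fitt}_0(G/M)$ is principal, and combine this with $\operatorname{grade}(\operatorname{Fitt}_0(G/M)) \ge 1$ — here one uses that $R$ is a domain, so depth considerations via the Hilbert--Burch-type lower bound force the presentation to have $s = n$, whence $G/M \cong R/(d)$ and $M$ is free.

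The main obstacle is the converse direction: deducing freeness of $M$ from principality of $I_G(M)$. The cleanest route is to reduce to $n = 1$ by passing to exterior powers — one has $\wedge^n M \subseteq \wedge^n G \cong R$ with $\wedge^n M$ identified (up to the ambient identification) with the ideal $I_G(M)$, and the delicate point is to check that $M$ free is equivalent to $\wedge^n M$ being a free (i.e. principal) submodule of $\wedge^n G$. For the rank-one case this is immediate: a rank-one torsion-free module over a local domain embedded in $R$ is free iff it is a principal ideal. Lifting this back from $\wedge^n M$ to $M$ requires knowing that $M$ is reflexive, or at least that $M$ free $\iff \wedge^n M$ free, which over a regular local ring of dimension two is available; this compatibility between $M$ and its top exterior power is the step I expect to require the most care, and is presumably why the hypothesis "$R$ Noetherian local domain" (together, in applications, with regularity) is imposed.
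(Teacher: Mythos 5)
The forward implication in your write-up is fine (and is the direction the paper dismisses as trivial), but the converse — the only substantive content of the lemma — is not actually proved: you list three possible strategies and each, as stated, has a concrete defect. The Fitting-ideal/grade route cannot force $s=n$: the Eagon--Northcott bound gives grade $I_n(A)\le s-n+1$, so knowing the grade is $\ge 1$ only yields $s\ge n$, which is vacuous; moreover the asserted conclusion $G/M\cong R/(d)$ is simply false in general (take $R$ regular local of dimension two with maximal ideal $(x,y)$, $G=R^2$ and $M=xR\oplus yR$: here $M$ is free, $I_G(M)=(xy)$ is principal, but $G/M\cong R/(x)\oplus R/(y)$ is not cyclic). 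The exterior-power route is circular: identifying the image of $\wedge^n M$ in $\wedge^n G\cong R$ with $I_G(M)$ is correct, but the equivalence ``$M$ free $\iff$ this image is principal'' is exactly the statement being proved, and you acknowledge you do not know how to establish it; appealing to two-dimensional regularity is in any case not allowed, since the lemma is stated (and is used) for an arbitrary Noetherian local domain. So as it stands there is a genuine gap in the nontrivial implication.

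The missing idea is an elementary Cramer-type argument, which is what the paper does. Since $R$ is local and the maximal minors generate the principal ideal $I_G(M)$, some single maximal minor $d$ already generates it; the $n=rk_R(G)$ columns producing that minor are $K$-linearly independent, hence generate a free submodule $C\subseteq M$ of rank $n$. Now take any other column $v$ of the representing matrix and write $v=\sum_i c_i w_i$ with $c_i\in K$, where $w_1,\dots,w_n$ are the chosen columns. Replacing $w_i$ by $v$ in the chosen minor gives, by multilinearity of the determinant, the element $c_i d$, which on the other hand is a maximal minor of $M$ and so lies in $I_G(M)=(d)$; since $R$ is a domain and $d\neq 0$, each $c_i\in R$, so $v\in C$ and $M=C$ is free. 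This argument needs nothing beyond $R$ being a Noetherian local domain, which is why the lemma carries no regularity hypothesis.
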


\begin{proof} One implication being trivial, we prove the other. Suppose that
$I_G(M)$ is principal. Since $R$ is local, some maximal minor of a
representing matrix of $M$ generates $I_G(M)$.
Say it corresponds to some $rk_R(G)$ columns of such a matrix. These generate some free
submodule of $M$ of rank $rk_R(G)$. We claim that this submodule is $M$ itself.
For take any other column of $M$. Write it as a linear combination of the chosen
$rk_R(G)$ columns with coefficients in $K$ - the field of fractions of $R$.
Consider the minor of $M$ obtained by replacing one of the $rk_R(G)$ columns by
this column. On the one hand this gives the corresponding coefficient times the
generating maximal minor of $I_G(M)$. On the other hand this lies in $I_G(M)$. It follows that the coefficient is in $R$. So this column
lies in the $R$-submodule generated by the chosen $rk_R(G)$ columns .
\end{proof}

\begin{definition} Let $M$ be a finitely generated, torsion-free $R$-module.
Let $S$ be a birational overring of $R$ of the form $R[\frac{\bf m}{x}]$ where $x$ is
a minimal generator of ${\bf m}$. We call $MS$ the transform
of $M$ in $S$. The module $M$ is said to be contracted from $S$ if $M = MS \cap F$ 
regarded as submodules of $FS$.
\end{definition}

The following proposition - see Proposition 2.5 of \cite{Kdy1995} - is a useful characterisation of contracted modules.

\begin{proposition}\label{contracted}
With notation as above $\nu_R(M) \leq ord_R(M)+rk_R(M)$. Further,
the following conditions are equivalent:
\begin{enumerate}
\item There exists $x \in \m \backslash \m^2$ such that $M$ is contracted from $S=R[\frac{\bf m}{x}]$.
\item There exists $x \in \m \backslash \m^2$ such that $(M:_Fx) = (M:_F \m)$.
\item There exists $x \in \m \backslash \m^2$ such that $\lambda_R(F/(xF+M)) = \nu_R(M) - rk_R(M)$.
\item $ord_R(M) = \nu_R(M) - rk_R(M)$.
\item For any $x \in \m \backslash \m^2$ such that $ord_R(M) = \lambda_R(R/(x,I(M)))$, $M$ is contracted
from $S=R[\frac{\bf m}{x}]$.
\end{enumerate}
\end{proposition}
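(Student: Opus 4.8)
The plan is to first establish the inequality $\nu_R(M) \leq \operatorname{ord}_R(M) + \operatorname{rk}_R(M)$ and then prove the cycle of equivalences (1)$\Rightarrow$(2)$\Rightarrow$(3)$\Rightarrow$(4)$\Rightarrow$(5)$\Rightarrow$(1). For the inequality, I would pick $x \in \mathbf{m} \setminus \mathbf{m}^2$ generic enough that $\operatorname{ord}_R(I(M)) = \lambda_R(R/(x,I(M)))$ — possible because the residue field is infinite, so that a general linear form is a superficial element for the $\mathbf{m}$-primary ideal $I(M)$ and hence $\lambda_R(R/(x,I(M))) = e(I(M)) = \operatorname{ord}_R(I(M))$ in a two-dimensional regular ring. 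Reducing modulo $x$ turns $F$ into a one-dimensional regular (hence principal ideal, after localizing away from the problem, or rather a DVR) situation; the number of generators of $M$ is bounded by $\operatorname{rk}_R(M)$ (the generators coming from a minimal reduction / the free part mod $x$) plus the colength of $M$ inside $xF + M$, which one bounds by $\lambda_R(F/(xF+M)) \le \lambda_R(R/(x,I(M)))$ using a Fitting-ideal / determinantal comparison between $I(M)$ and the $0$-th Fitting ideal of $F/M$.

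For the equivalences: (1)$\Leftrightarrow$(2) is essentially the definition unwound — $M = MS \cap F$ inside $FS$, where $S = R[\mathbf{m}/x]$, translates via the standard description of $S$ (every element of $FS$ is $w/x^n$ with $w \in F$) into the statement that $x$ is a nonzerodivisor on $F/M$ \emph{in the strong sense} that multiplication by $x$ and by any other minimal generator of $\mathbf{m}$ have the same kernel on $F$, i.e. $(M:_F x) = (M :_F \mathbf{m})$; I would run through the inclusions $MS \cap F \supseteq M$ always, and chase an element $w/x^n \in MS \cap F$ down by induction on $n$ to see that $n$ can be taken $0$ exactly when the colon condition holds. (2)$\Leftrightarrow$(3) follows by a length count: from the exact sequence $0 \to (M:_F x)/M \to F/M \xrightarrow{x} F/M \to F/(xF+M) \to 0$ one gets $\lambda_R(F/(xF+M)) = \lambda_R((M:_F x)/M)$, and separately $\lambda_R((M :_F \mathbf{m})/M) = \nu_R(M) - \operatorname{rk}_R(M)$ because $(M:_F\mathbf{m})/M \cong \operatorname{Hom}_R(k, F/M)$ has length equal to the number of generators of $F/M$ as a module, which is $\nu_R(M) - \operatorname{rk}_R(M)$ since $F$ is free of rank $\operatorname{rk}_R(M)$ and $F/M$ can be minimally generated by... — this last point needs care and is where I'd be most careful (one wants $\nu_R(F/M) = \nu_R(M) - \operatorname{rk}_R(M)$, true because $F \to F/M$ sends a minimal generating set of $F$ to one that can be shrunk to a minimal one of $F/M$, with the drop being exactly $\operatorname{rk}_R(M)$ — provable via Nakayama on $M + \mathbf{m}F$). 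Combined with the general inequality $\lambda_R(F/(xF+M)) \ge \nu_R(M) - \operatorname{rk}_R(M)$ for every $x$ (already built into the proof of the displayed inequality), condition (3) says this bound is attained. (3)$\Leftrightarrow$(4): since $\lambda_R(F/(xF+M)) \ge \lambda_R(F/(xF + M + \mathbf{m}F)) $ and the minimum of $\lambda_R(F/(xF+M))$ over $x \in \mathbf{m}\setminus\mathbf{m}^2$ equals $\operatorname{ord}_R(M)$ — here one identifies $\lambda_R(F/(xF+M))$, after a generic choice of $x$, with $\lambda_R(R/(x, I(M))) = \operatorname{ord}_R(I(M)) = \operatorname{ord}_R(M)$ via the determinantal description of $I(M)$ and a change-of-coordinates argument — condition (4) is exactly the statement that $\operatorname{ord}_R(M)$ (the minimum) coincides with $\nu_R(M) - \operatorname{rk}_R(M)$ (the bound from (3)), for \emph{some} $x$. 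Finally (4)$\Rightarrow$(5) and (5)$\Rightarrow$(1): if equality holds in the order inequality, then \emph{any} $x$ realizing $\operatorname{ord}_R(M) = \lambda_R(R/(x,I(M)))$ also realizes $\lambda_R(F/(xF+M)) = \operatorname{ord}_R(M) = \nu_R(M)-\operatorname{rk}_R(M)$, hence (3) holds for that $x$, hence (2), hence (1) for that $x$.

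The main obstacle I anticipate is the precise bookkeeping in the step translating between $\lambda_R(F/(xF+M))$ and $\lambda_R(R/(x,I(M)))$ — that is, comparing the $0$-th Fitting ideal $I(M)$ of $F/M$ with the colength of $F/M$ modulo $x$. For a general $x$ (using the infinite residue field to choose $x$ avoiding finitely many bad directions, so that $x$ is superficial for $I(M)$ and the representing matrix of $M$ is in suitably generic position), reducing mod $x$ turns $R/(x)$ into a DVR and $F/(xF)$ into a free module over it, where the colength of the image of $M$ is read off from the order of a single minor, namely $\operatorname{ord}_R(I(M) \bmod x)$; making this rigorous — and checking the inequality $\lambda_R(F/(xF+M)) \ge \nu_R(M) - \operatorname{rk}_R(M)$ holds for \emph{every} $x$, not just generic ones — is the technical heart. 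Everything else is exact-sequence chasing and Nakayama.
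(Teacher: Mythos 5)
Your overall architecture is the right one (and is essentially the standard argument; note the paper itself does not prove this proposition but quotes it from \cite{Kdy1995}): pass to the Artinian quotient $F/(xF+M)$, identify its length with $\lambda_R(R/(x,I(M)))$ via Fitting ideals and the structure of modules over the DVR $R/xR$, use the infinite residue field to choose $x$ with $\lambda_R(R/(x,I(M)))=\ord_R(M)$, and chase elements of $MS\cap F$ for (1)$\Leftrightarrow$(2). However, two of the numerical claims your chain rests on are wrong as stated, and one of them is the load-bearing step. First, the identity you need is $\lambda_R\bigl((M:_F\m)/M\bigr)=\nu_R(M)-\rk_R(M)$, but your justification is false: $\Hom(k,F/M)$ is the socle of $F/M$, whose length is the Cohen--Macaulay type (the last Betti number), not the number of generators; and $\nu_R(F/M)=\nu_R(M)-\rk_R(M)$ is simply not true. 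Take $M=\m^2\subset F=R$: then $\nu_R(F/M)=1$ while $\nu_R(M)-\rk_R(M)=2$ (the socle of $R/\m^2$ has length $2$). Since this identity underlies your general inequality $\lambda_R(F/(xF+M))\ge \nu_R(M)-\rk_R(M)$, the equivalence (2)$\Leftrightarrow$(3), and hence the whole cycle, this is a genuine gap. A correct route: reduce to $M\subseteq\m F$ by splitting off free summands and read the type off the minimal resolution $0\to R^{\nu-r}\to R^{\nu}\to F\to F/M\to 0$; or better, avoid the socle identity entirely: for $x\in\m\setminus\m^2$ the image of $M$ in $F/xF$ is a full-rank submodule of a free module over the DVR $R/xR$, hence free, so $M/xM\cong (M:_Fx)/M\oplus (R/xR)^{r}$, giving $\nu_R(M)-r=\nu_R\bigl((M:_Fx)/M\bigr)\le\lambda_R\bigl((M:_Fx)/M\bigr)=\lambda_R(F/(xF+M))$ (the last equality from the multiplication-by-$x$ sequence on $F/M$, which you do have), with equality precisely when $\m\cdot(M:_Fx)\subseteq M$, i.e.\ when $(M:_Fx)=(M:_F\m)$. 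This one computation delivers the inequality for every $x$ (your acknowledged ``technical heart'') and the fixed-$x$ equivalence of (2) and (3).

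Second, the genericity step is misjustified: ``$\lambda_R(R/(x,I(M)))=e(I(M))=\ord_R(I(M))$'' is false in general (already $e(\m^n)=n^2\neq n$, and $\lambda_R(R/(x,I))$ is not $e(I)$); superficiality is not the relevant notion. What is true, and all you need, is $\lambda_R(R/(x,I))\ge\ord_R(I)$ for every $x\in\m\setminus\m^2$, with equality whenever the leading form of $x$ does not divide the leading form of some element of $I$ of minimal order, which holds for most $x$ because $k$ is infinite. With these two repairs, plus stating the Fitting-ideal step as ``Fitting ideals commute with base change, and over the DVR $R/xR$ the colength of $F/(xF+M)$ equals the colength of its zeroth Fitting ideal $I(M)(R/xR)$'' (rather than ``the order of a single minor''), your plan closes up and coincides with the proof of Proposition 2.5 of \cite{Kdy1995}.
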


A first quadratic transform of $R$ is a ring obtained by localizing a ring of
the form $S = R[\frac{\bf m}{x}]$ (as above)  at a maximal
ideal containing ${\bf m}S$. Such a ring is itself a two-dimensional regular local
ring and we define an $n$-th quadratic transform of $R$ as a
first quadratic transform of an $(n-1)$st quadratic transform of $R$. In
general, a quadratic transform of $R$ is a $n$-th quadratic transform
of $R$ for some $n$. By convention, we regard $R$ itself as a quadratic transform
of $R$ with $n = 0$. 

By well-known results on quadratic transforms - see p392 of \cite{ZrsSml1960} - if $T$ is 
a quadratic transform of $R$, there is a unique sequence of quadratic 
transforms, $R = T_0 \subset T_1 \subset \hdots \subset T_n = T$,
where each $T_{i+1}$ is a first quadratic transform of $T_i$ for
$i = 0, \hdots , n-1$.
Further, if $\m = (x,y)R$, then any first quadratic transform of $R$ is either a 
localisation of $R[\frac{\m}{x}]$ or is the localisation of $R[\frac{\m}{y}]$ at the maximal ideal $(y,\frac{x}{y})R[\frac{\m}{y}]$.

For a finitely generated, torsion-free $R$-module $M$ and a quadratic
transform $T$ of $R$, the transform of $M$ in $T$ is defined to be the module $MT$.
For an ideal $I$, we consider also the related notion of proper transform, denoted  $I^T$, which is defined to be the $\m_T$-primary ideal
$x^{-ord_T(IT)}IT$, where $x \in \m_RT$ is a generator.

We will use the following result - see Proposition 4.3, Proposition 4.6, Theorem 5.2, Theorem 5.3, Theorem 5.4 of \cite{Kdy1995}.

\begin{theorem}\label{Kdymain}
If $M$ is an integrally closed module over a two-dimensional regular local ring $R$,
then, for most $x \in \m \backslash \m^2$, $M$ is contracted from $S=R[\frac{\bf m}{x}]$.
The ideal $I(M)$ is integrally closed.
All the symmetric powers of $M$ modulo $R$-torsion, $S_n(M)$, are integrally closed.
The transform $MT$ of $M$ in a quadratic transform $T$ of $R$ is integrally closed.
If $M$ and $N$ are both integrally closed, so is $MN$.
\end{theorem}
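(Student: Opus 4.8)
The plan is to derive all five assertions from Zariski's theory of complete ideals in a two-dimensional regular local ring, using the valuative and determinantal criteria of Theorem~\ref{basic} and the list of equivalent conditions in Proposition~\ref{contracted} as the dictionary between modules and ideals. For the first assertion the key point is the sharp generator bound $\nu_R(M) = \ord_R(M) + \rk_R(M)$ for integrally closed $M$ — that is, condition~(4) of Proposition~\ref{contracted}. This is the module analogue of Zariski's equality $\nu_R(I) = \ord_R(I) + 1$ for a complete $\m$-primary ideal, and I would prove it by passing to the module of leading forms of $M \subseteq F$ inside $\gr_\m R \cong k[X,Y]$ and arguing that integral closure removes the only obstruction to the generator count being minimal. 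Granting condition~(4), condition~(5) of Proposition~\ref{contracted} reduces the ``most $x$'' statement to the observation that $\lambda_R(R/(x,I(M))) = \ord_R(I(M))$ holds as soon as the leading form of $x$ in $k[X,Y]$ is not one of the finitely many linear factors of the degree-$\ord_R(I(M))$ component of the leading-form ideal of $I(M)$; since $k$ is infinite such $x$ are abundant.

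For the integral closedness of $I(M)$ and of the transforms I would work with the valuative description $M = \bigcap_V MV$, the intersection over the discrete valuation rings $V$ of $K$ containing $R$. That $I(M) = E_r(M)$ is integrally closed comes down to a compatibility of the top-exterior-power functor $E_r(-)$ with this intersection: for a discrete valuation ring $V$, $E_r(MV)$ is (the contraction to $R$ of) a valuation ideal, so $\bigcap_V E_r(MV)$ is visibly complete, and one checks it equals $E_r(M)$ using the determinantal criterion of Theorem~\ref{basic} together with a reduction to a $(\rk_R(M)+1)$-generated minimal reduction, for which the relevant minors are tractable. For the transforms, integral closure localizes, so one reduces to $S = R[\frac{\m}{x}]$ and shows $MS$ is integrally closed over $S$ by a direct valuative computation: with $\m = (x,y)R$, the discrete valuation rings $W$ of $K$ containing $S$ are exactly those $W \supseteq R$ with $\ord_W(x) \le \ord_W(y)$, and from $M = \bigcap_{W \supseteq R} MW$ one deduces that any $v$ integral over $MS$ satisfies $x^k v \in \overline M = M$ for some $k$, whence $v \in \frac{1}{x^k}M \subseteq MS$.

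The remaining two assertions are then the module versions of Zariski's ``powers of complete ideals are complete'' and ``products of complete ideals are complete'', proved by the same inductive scheme. For the symmetric powers, using $S_n(M)S = S_n(MS)$ and that $S_n(M)$ is contracted from a generic $S$ (the first assertion applied to $S_n(M)$), one inducts on $\ord_R(M)$: the base case is $M$ free, and in the inductive step one replaces $R$ by a quadratic transform $T$ where $MT$ is integrally closed of smaller order (by the transform assertion), so that $S_n(MT)$ is integrally closed by induction, and then descends. For the product $MN$: pass to a common first quadratic transform, where the transforms of $M$ and $N$ are integrally closed by the transform assertion and the associated ideal is a product of complete ideals (by the classical product theorem for ideals together with the integral closedness of $I(M)$), so the induction on $\ord_R(M) + \ord_R(N)$ closes via the contractedness criterion.

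The step I expect to be the main obstacle is the pair formed by the sharp generator bound (the first assertion) and the behaviour of integral closure under the blow-up $R \to R[\frac{\m}{x}]$ (the transform assertion): these are the only two places where one genuinely has to control how integral closure interacts with leading forms and with quadratic transforms, and the inductions for the symmetric powers and the product rest entirely on them. Once both are established, everything else is bootstrapped exactly as in Zariski's treatment of complete ideals.
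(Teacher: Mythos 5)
You should note first that the paper does not prove Theorem~\ref{Kdymain} at all: it is a summary of results quoted from \cite{Kdy1995} (Propositions 4.3, 4.6 and Theorems 5.2--5.4 there), so your sketch is being measured against that source. Your overall architecture --- contractedness via the generator count of Proposition~\ref{contracted}, then behaviour under quadratic transform, then a Zariski-style induction for symmetric powers and products --- is indeed the architecture of \cite{Kdy1995}. But at the two steps you yourself identify as carrying all the weight, the proposal is not a proof. The transform argument contains an outright error: from an equation of integral dependence of $v$ over $MS$ you can indeed clear denominators and conclude $x^k v$ is integral over $M$, hence $x^k v \in \overline{M} = M$; but the final step ``$v \in \frac{1}{x^k}M \subseteq MS$'' is false, because $x$ is not a unit in $S=R[\frac{\m}{x}]$ --- it generates $\m S$. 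Concretely, for $M=\m\subseteq F=R$ one has $\frac{y}{x}\in\frac{1}{x}\m$ but $\frac{y}{x}\notin \m S=xS$; and even the refined containment $FS\cap\frac{1}{x^k}M\subseteq MS$ fails (take $M=(x^2,y)$ and $v=\frac{y}{x}\in S$, which is not in $MS=x(x,\frac{y}{x})S$). Your computation only shows that $\overline{MS}$ lies in the $R[\frac{1}{x}]$-module generated by $M$, which is far weaker; the integral closedness of $MT$ is a genuine theorem in \cite{Kdy1995} and needs a different argument.

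The other load-bearing steps are likewise unproven as written. For the first assertion, ``integral closure removes the only obstruction to the generator count being minimal'' is a restatement of the desired equality $\nu_R(M)=\ord_R(M)+\rk_R(M)$, not an argument. For $I(M)$: under the identification $E_r(F)\cong R$ one has $E_r(MV)=I(M)V$ for each valuation ring $V$, so $\bigcap_V E_r(MV)\cap R=\overline{I(M)}$, and your claim that this intersection ``equals $E_r(M)$'' is precisely the statement $\overline{I(M)}=I(M)$ that is to be proven; the promised check via a $(\rk_R(M)+1)$-generated reduction is exactly the hard content of the cited theorem and is not supplied. Finally, in the symmetric-power induction you invoke the first assertion for $S_n(M)$ to obtain its contractedness, but that assertion applies only to integrally closed modules, and the integral closedness of $S_n(M)$ is what you are trying to prove --- a circularity. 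What is needed there (and what \cite{Kdy1995} proves separately) is a lemma that products and symmetric powers of modules contracted from the same $S$ are again contracted from $S$. In short, the roadmap matches the cited source, but each of the crucial steps is either circular, missing, or, in the transform case, incorrect.
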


\subsection{Buchsbaum-Rim Multiplicity}

Let $R$ be a Noetherian local ring of dimension $d$. Let $P$ be an $R$-module
of finite length with a free presentation
$$
G\rightarrow F \rightarrow P \rightarrow 0.
$$
Buchsbaum and Rim - see Theorem 3.1 of \cite{BchRim1964} - showed that if $R$ is a Noetherian
local ring of dimension $d$ and $P$ is a finite length, non-zero
$R$-module and $S(G)$ is the image of $\Sym^R(G)$ in $\Sym^R (F)$, then $\lambda_R({\Sym}_n^R(F)/S(G)$ is asymptotically given by
a polynomial function, $p(n)$, of $n$ of degree $\rk (F) + d - 1$ and that the
normalized leading coefficient is independent of the presentation chosen.

\begin{definition} With notation as above, the normalized leading
coefficient of $p(n)$, is an invariant of $P$ and is
called the Buchsbaum-Rim multiplicity of $P$. The Buchsbaum-Rim multiplicity
of the zero module is defined to be zero.
\end{definition}

In a two-dimensional regular local ring $(R,{\bf m},k)$ with infinite residue
field and a finitely generated, torsion-free $R$-module $M$, we let
$e_R(M)$ denote the Buchsbaum-Rim multiplicity of $F/M$ where 
$F = M^{**}$.
We will need the following result that is a consequence of Corollary 4.5 of \cite{BchRim1964} and Proposition 3.8 of \cite{Kdy1995}.

\begin{proposition}\label{amalgam}
Let $(R,{\bf m},k)$ be a two-dimensional regular local ring with infinite residue field and $M \subseteq F = M^{**}$ a finitely generated, torsion-free $R$-module with minimal reduction $N$. Then
$$
e_R(M) = e_R(N) = \lambda_R(F/N)
$$
\end{proposition}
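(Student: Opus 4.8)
The plan is to prove the two equalities in turn, after disposing of a trivial case and a preliminary normalisation. If $M$ is free then $M=F=N$ and all three quantities vanish, so assume throughout that $M$ is non-free. First I would check that $N^{**}=F$, so that $e_R(N)$ --- defined as the Buchsbaum-Rim multiplicity of $N^{**}/N$ --- is the Buchsbaum-Rim multiplicity of $F/N$, computed inside the same ambient free module $F$ as $e_R(M)$ and $\lambda_R(F/N)$. Since $M\subseteq\overline N$, applying the determinantal criterion of Theorem~\ref{basic} along a chain of submodules running from $N$ up to $M$, enlarged one generator at a time, shows that $I_F(N)=E_r(N)$ is a reduction of $E_r(M)=I(M)$; as $I(M)$ is ${\bf m}$-primary, so is $I_F(N)$, whence $N$ is non-free by Lemma~\ref{prinfree} and, localising at the height-one primes, $F/N$ has finite length. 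By the characterisation of the double dual recalled at the start of \S\ref{Kdysum}, an inclusion $N\subseteq F$ into a free module of the same rank with finite-length cokernel forces $N^{**}=F$.

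For the equality $e_R(M)=e_R(N)$: because $N$ is a reduction of $M$, the equational criterion of Theorem~\ref{basic} shows that each of the finitely many degree-one algebra generators of $S(M)$ is integral over the subalgebra $S(N)\subseteq S(M)$, so $S(M)$ is module-finite over $S(N)$. Since $S(M)$ and $S(N)$ have the same field of fractions, $S(M)/S(N)$ is a torsion $S(N)$-module; as $\dim S(N)=\rk_R(N)+\dim R=\rk_R(F)+2$, this graded module has dimension at most $\rk_R(F)+1$, so $\lambda_R(S_n(M)/S_n(N))$ agrees for large $n$ with a polynomial in $n$ of degree at most $\rk_R(F)$ --- strictly less than the degree $\rk_R(F)+1$ of the Buchsbaum-Rim function. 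Subtracting it from $\lambda_R({\Sym}_n^R(F)/S_n(N))$ leaves $\lambda_R({\Sym}_n^R(F)/S_n(M))$ with the same leading term, and therefore $e_R(M)=e_R(N)$; this is essentially Corollary~4.5 of \cite{BchRim1964}.

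For the equality $e_R(N)=\lambda_R(F/N)$: by the theorem of Rees on minimal reductions recalled above, a non-free module over a two-dimensional regular local ring has a minimal reduction generated by $\rk+1$ elements, and since $N$ is a minimal reduction of $M$ this gives $\nu_R(N)=\rk_R(N)+1=r+1$ with $r=\rk_R(F)$. Thus $F/N$ has a presentation $R^{r+1}\to R^{r}\to F/N\to 0$ whose ideal of maximal minors is the ${\bf m}$-primary ideal $I_F(N)$, of grade $2=\dim R$, so the associated Buchsbaum-Rim complex is acyclic and provides the free resolution $0\to R\to R^{r+1}\to R^{r}\to F/N\to 0$. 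One then appeals to the Buchsbaum-Rim multiplicity computation in this extremal case: a finite-colength module generated by the smallest possible number $\rk_R(F)+\dim R-1$ of elements has Buchsbaum-Rim multiplicity equal to its colength --- the module analogue of the fact that a parameter ideal in a Cohen-Macaulay local ring has multiplicity equal to its colength. This is what Proposition~3.8 of \cite{Kdy1995} supplies (again resting on Corollary~4.5 of \cite{BchRim1964}), and combined with the previous paragraph it proves the proposition.

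The step I expect to be the main obstacle is this last equality $e_R(N)=\lambda_R(F/N)$: it cannot be read off the ranks in the resolution, and establishing it requires the genuine Buchsbaum-Rim calculation --- the acyclicity of the Buchsbaum-Rim complex together with a Hilbert-series (equivalently, Euler-characteristic) argument --- which is exactly where I would invoke the cited results of \cite{BchRim1964} and \cite{Kdy1995} rather than reprove them.
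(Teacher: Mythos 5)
Your proposal is correct and takes essentially the same route as the paper, which offers no argument of its own for this proposition beyond citing Corollary 4.5 of \cite{BchRim1964} and Proposition 3.8 of \cite{Kdy1995}: your two main steps (invariance of the Buchsbaum--Rim multiplicity under passage to a reduction, and the extremal computation $e_R(N)=\lambda_R(F/N)$ for an $(\rk_R(M)+1)$-generated submodule via the acyclic Buchsbaum--Rim complex) are precisely what those citations supply. The connecting details you fill in --- that $I_F(N)$ is $\m$-primary, that $N^{**}=F$, and that minimality forces $\nu_R(N)=\rk_R(M)+1$ --- are sound.
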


\section{Analogue of the Hoskin-Deligne length formula}\label{lmf}

All notation in this section will be as in \S \ref{Kdysum}. In particular, $R$ will be
a two-dimensional regular local ring with maximal ideal $\m$, infinite
residue field $k$ and field of fractions $K$ and $M$ will be a finitely generated,
torsion-free $R$-module with double dual $F$. The order valuation ring of $R$ will
be denoted by $V$. These notations will be in force in the statements of all results
of this section.

\subsection{On modules contracted from the order valuation ring}
The goal of this subsection is to study
some properties of modules contracted from
the order valuation ring $V$ of $R$. These
will form the basic building blocks in the
analogue of the Hoskin-Deligne formula.

We begin with the following lemma which will be used to compute the contraction
of a module extended to $V$.

\begin{lemma}\label{Clem} There exists a free submodule $C \subseteq M$
of rank equal to $rk_R(M)$ such that $CV \cap F = MV \cap F$.
Further, $ord_R(det(C)) = ord_R(M)$.
\end{lemma}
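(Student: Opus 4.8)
The plan is to build $C$ greedily, choosing generators of $M$ one rank at a time so as to not lose any "capacity" with respect to the valuation $V$. Fix a minimal generating set $m_1, \dots, m_\nu$ of $M$ (with $\nu = \nu_R(M)$) and work inside the free module $F = M^{**}$, in which $M$ sits with finite colength. Since $V = V_R$ is the order valuation ring, $V$ is a DVR and $MV$ is a free $V$-module of rank $r = \rk_R(M)$; moreover $MV = m_1 V + \cdots + m_\nu V$, so a suitable $r$-element subset of the $m_i$ already spans $MV$ over $V$. Indeed, since $V$ is local, among the maximal minors of the matrix expressing the $m_i$ in a basis of $FV$ one has minimal $V$-order, and the corresponding $r$ columns, say (after reindexing) $m_1, \dots, m_r$, form a $V$-basis of $MV$. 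Set $C = Rm_1 + \cdots + Rm_r$. This is a free $R$-module of rank $r$ (its maximal minor in $F$, namely $\det(C)$, is nonzero, being a unit times the minor realizing $MV$), and by construction $CV = MV$, hence trivially $CV \cap F = MV \cap F$.

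For the order statement, observe $CV = MV$ gives $E_r(C)V = E_r(M)V$ as fractional ideals of $V$, so $\ord_V(E_r(C)) = \ord_V(E_r(M))$ where we measure these rank-one modules inside $\wedge^K_K(F_K)$. After fixing the isomorphism of $E_r(F)$ with $R$ (it is free of rank one since $F$ is free), $E_r(C)$ and $E_r(M)$ become ideals of $R$: the first is the principal ideal generated by $\det(C)$ (this is exactly Lemma \ref{prinfree}, since $C$ is free), and the second is $I(M) = I_F(M)$. Thus $\ord_R(\det(C)) = \ord_R((\det C)) = \ord_R(E_r(C)) = \ord_R(E_r(M)) = \ord_R(I(M)) = \ord_R(M)$, the middle equality because $\ord_R$ restricted to principal fractional ideals factors through $\ord_V$, and $\ord_V$ only sees $E_r(C)V = E_r(M)V$.

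The main obstacle I anticipate is the transition between "equal after extension to $V$" and the two conditions actually asked for: one must be careful that passing from $CV = MV$ to $CV \cap F = MV \cap F$ is immediate (it is, since both sides are computed inside $FV$ and $F \subseteq FV$), but one should double-check that the candidate subset of generators really does span $MV$ — this is where the local-ring argument "a minor of minimal order generates $E_r(M)V$" must be invoked cleanly, mirroring the proof of Lemma \ref{prinfree}. A secondary subtlety is bookkeeping with the rank-one modules $E_r(-)$ as fractional ideals versus honest ideals of $R$: the identification of $E_r(C)$ with $(\det C)$ and of $E_r(M)$ with $I(M)$ depends on using the same isomorphism $E_r(F) \cong R$ throughout, but Theorem \ref{basic}'s setup already guarantees such identifications are harmless. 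No hard new input is needed beyond Lemma \ref{prinfree} and the fact that $V$ is a DVR.
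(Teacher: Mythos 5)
Your proposal is correct and takes essentially the same route as the paper: you pick $r$ columns of a representing matrix of $M$ that generate $MV$ over the DVR $V$ (you select them via a maximal minor of minimal $V$-order, mirroring Lemma \ref{prinfree}, where the paper simply cites the PID structure of $V$) and let $C$ be the $R$-module they generate, so $CV=MV$ and the first claim is immediate. For the order statement the paper appeals to Lemma \ref{ordlemma} (computing $\lambda_V(FV/MV)$), while you observe that $\ord_R$ of $(\det C)$ and of $I(M)$ depends only on their extensions to $V$, which agree because $E_r(C)V=E_r(M)V$; this is the same underlying fact that order can be read off after extension to $V$.
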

\begin{proof} Consider a matrix representation of $M \subseteq F$ as a $rk_R(M) \times \nu_R(M)$
matrix over $R$ so that $M$ is generated by the columns of this matrix. Hence so is
$MV \subseteq FV$, as a module over $V$. Since $V$ is a principal ideal domain and
$MV$ and $FV$ are of equal rank, some $rk_R(M)$ columns of this matrix generate
$MV$. Let $C$ be the $R$-submodule of $M$ generated by these columns. Then $C$ is free
of rank $rk_R(M)$ and $CV = MV$ so that $CV \cap F = MV \cap F$. Appeal to Lemma \ref{ordlemma} to see that $ord_R(det(C)) = ord_R(M)$.
\end{proof}

\begin{lemma}\label{compute} Suppose that $C \subseteq F$ are free modules of equal rank
and let $n = ord_R(det(C))$.
Then $CV \cap F =(\m^nC :_F \det (C)).$
\end{lemma}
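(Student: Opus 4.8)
The plan is to pass to matrices and reduce the asserted equality to a statement about the adjugate. Fix a basis $f_1,\dots,f_r$ of $F$ and a basis $c_1,\dots,c_r$ of $C$, where $r=rk_R(F)=rk_R(C)$, and let $A$ be the $r\times r$ matrix over $R$ whose columns express the $c_j$ in terms of the $f_i$. Since $C$ and $F$ have the same rank, the inclusion $C\otimes_R K\hookrightarrow F\otimes_R K$ is an isomorphism of $r$-dimensional $K$-vector spaces, so $A$ is invertible over $K$ and $\det A\ne 0$; moreover, under the identification of $E_r(F)$ with $R$ the ideal $\det(C)=E_r(C)$ is generated by $\det A$, and by hypothesis $ord_R(\det A)=n$. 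Identifying $F$ with $R^r$ and $F_K$ with $K^r$ via the $f_i$, we have $C=AR^r$, hence $\m^nC=A(\m^nR^r)$, and $CV$ is the $V$-span of the columns of $A$, that is, $CV=\{Ac:c\in V^r\}$.

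Next I would show that membership in each of the two modules is governed by the single condition $\adj(A)w\in\m^nF$. On one side, for $w\in F$ the relation $\det(C)\,w\subseteq\m^nC$ means $(\det A)w\in A(\m^nR^r)$, i.e.\ $\adj(A)w=A^{-1}(\det A)w\in\m^nR^r=\m^nF$. On the other side, $w\in CV$ means $A^{-1}w\in V^r$; writing $A^{-1}=(\det A)^{-1}\adj(A)$ and noting that $\adj(A)w$ has entries in $R$, this says precisely that each entry of $\adj(A)w$ has order at least $ord_R(\det A)=n$, which is exactly the assertion $\adj(A)w\in\m^nF$ once one invokes the defining property of the order valuation, namely $\m^n=\{x\in R:ord_R(x)\ge n\}$. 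Comparing the two characterisations gives $CV\cap F=(\m^nC:_F\det(C))$.

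I do not expect a genuine obstacle. The two points requiring care are: first, the identification of $\det(C)$ with the principal ideal $(\det A)\subseteq R$, so that the colon module $(\m^nC:_F\det(C))$ is literally $\{w\in F:(\det A)w\in\m^nC\}$; this is independent of the chosen bases since a change of basis of $C$ or of $F$ alters $\det A$ only by a unit and leaves $\m^nC$ and $CV\cap F$ unchanged. Second, the passage between ``every coordinate has order $\ge n$'' and ``the vector lies in $\m^nF$'', which rests on the standard fact that in a regular local ring $\m^n$ consists exactly of the elements of order at least $n$.
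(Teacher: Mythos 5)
Your proposal is correct and rests on the same two ingredients as the paper's proof: the adjugate identity $\det(A)=A\,\adj(A)$ and the fact that $\det(C)$ generates $\m^nV$ together with $\m^n=\{x\in R: ord_R(x)\ge n\}$ (equivalently $\m^nCV\cap C=\m^nC$). The only difference is presentational: you recast the two inclusions as a single coordinate computation showing both sides equal $\{w\in F:\adj(A)w\in\m^nF\}$, which is a faithful matrix rephrasing of the paper's argument.
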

\begin{proof} Let $ w \in (\m^nC :_F \det (C))$. Then
$w\det (C) \in \m^nC \subseteq \m^nCV$. Since $det(C)$
generates $\m^nV$, it follows that $w \in CV \cap F$
showing that $(\m^nC:_F \det (C)) \subseteq CV \cap F$.

To see the opposite inclusion, observe that
$\det (C)(CV \cap F) = \det (C)CV \cap \det (C)F$.
Since, $\det (C) = C \adj(C)$ we have that
$ \det (C)CV \cap \det (C)F \subseteq \m^nCV \cap C = \m^nC$.
This proves that 
 $CV \cap F \subseteq (\m^nC :_F \det (C))$.
\end{proof}

\begin{proposition}\label{algclosed}
Suppose that $M = MV \cap F$, $ord_R(M) = n$ and that
 the residue field $k$ of $R$ is algebraically closed. Then,
$I(M) = \m^n$.
\end{proposition}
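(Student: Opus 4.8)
The plan is to use Lemma~\ref{Clem} to replace $M$ by a free submodule $C\subseteq M$ of rank $r=rk_R(M)$ with $CV\cap F=MV\cap F=M$ and $ord_R(\det(C))=ord_R(M)=n$; then by Lemma~\ref{compute} we have $M=(\m^nC:_F\det(C))$. Fixing bases of $C$ and $F$, the inclusion $C\subseteq F$ is given by an $r\times r$ matrix over $R$ whose determinant $d=\det(C)$ has $ord_R(d)=n$. Since $I(M)$ is the ideal of maximal minors of a matrix representing $M\subseteq F$, I would first compute $I(M)$ directly from the description $M=(\m^nC:_F d)$: an element $w\in F$ lies in $M$ iff $dw\in\m^nC$, i.e. iff $\adj(C)\cdot(dw)=d\cdot\adj(C)w\in d\,\m^n\cdot(\text{coordinate vectors})$, equivalently $\adj(C)w\in\m^n F$ (after cancelling $d$, legitimate since we are inside the domain $R$). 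So $M=\adj(C)^{-1}(\m^n F)$ as a submodule of $F=F_K\cap(\text{appropriate lattice})$, and a representing matrix for $M\subseteq F$ is obtained by writing a generating set of $\adj(C)^{-1}(\m^nF)$ in terms of the basis of $F$. The maximal minors of that matrix should then be computed to be $\m^n$.

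Concretely, I expect the cleanest route is: the map $F\to F$ given by $\adj(C)$ has image containing $\det(C)F=dF$, and $M$ is exactly the preimage under $\adj(C)$ of $\m^n F$. Hence $\adj(C)$ carries $M$ isomorphically onto $\adj(C)F\cap\m^nF$. Since $\det(\adj(C))=d^{r-1}$ has order $(r-1)n$, and $\det$ of the inclusion $\m^nF\subseteq F$ generates $(\m^n)^r=\m^{rn}$, a determinant/index bookkeeping gives $\det$ of the inclusion $M\subseteq F$ generating $\m^{rn}/\m^{(r-1)n}=\m^n$ — but this only shows $I(M)\subseteq\m^n$ up to order, or that $E_r(M)\subseteq E_r(F)$ corresponds to an ideal of order $n$. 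To pin down $I(M)=\m^n$ on the nose (not just its order), I would use that $I(M)$ is integrally closed when $M$ is integrally closed; here $M$ need not be integrally closed, so instead I would argue as follows. Localize/complete and use that $k$ is algebraically closed: after a linear change of the basis of $F$, the matrix $\adj(C)$ can be brought to a normal form (Smith-like, but over the non-PID $R$ one instead uses that every $\m$-primary integrally closed ideal whose order is the number of its generators minus one is of a standard shape — cf. Proposition~\ref{contracted}) so that the description $M=(\m^nC:_Fd)$ visibly has $I(M)=\m^n$.

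The main obstacle I anticipate is exactly this last point: passing from ``$I(M)$ has order $n$'' to ``$I(M)=\m^n$''. The hypotheses that $M$ is contracted from $V$ and that $k$ is algebraically closed must be used essentially here — over a non-closed field $\m^n$ need not be the whole story, and for a general $M$ contracted from $V$ one wants to reduce to the case where $\adj(C)$ is (conjugate to) a matrix all of whose entries lie in $\m$ with $\det$ of order $n$, forcing $\adj(C)F+\ldots$ to produce precisely $\m^n$. I would handle this by induction on $n$: for $n=0$, $C=F$ and $M=F$, so $I(M)=R=\m^0$; for the inductive step, pick $x\in\m\setminus\m^2$ with $ord_R(M)=\lambda_R(R/(x,I(M)))$, use Proposition~\ref{contracted}(5) and the fact that $M$ is contracted from $V$ to relate $M$ to a module over the first quadratic transform along $x$ whose order drops, apply Theorem~\ref{Kdymain}-type stability under transforms, and lift the equality $I(\cdot)=\m^{n-1}$ back. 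The bookkeeping that the transform of $\m^n$ is $\m^{n-1}$ (proper transform) together with algebraic closedness of $k$ ensuring there is no ``extra'' quadratic transform to worry about should close the argument.
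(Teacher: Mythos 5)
Your reduction to $M=(\m^nC:_F\det(C))$ and the observation that $\adj(C)$ carries $M$ isomorphically onto $\adj(C)F\cap\m^nF$ are fine, but, as you yourself flag, they only control the \emph{order} of $I(M)$, and the whole content of the proposition is the passage from ``$ord_R(I(M))=n$'' to ``$I(M)=\m^n$''. Neither of your two proposed ways of closing this gap works as stated. The ``normal form'' for $\adj(C)$ over the two-dimensional ring $R$ is unjustified: there is no Smith form here, and Proposition \ref{contracted} contains no such structural statement about contracted modules or ideals. The induction on $n$ via a first quadratic transform also does not go through: the bookkeeping claim that the proper transform of $\m^n$ is $\m_T^{n-1}$ is false (since $\m T=xT$, one has $\m^nT=x^nT$ and the proper transform of $\m^n$ is the unit ideal), and, more seriously, for a module contracted from $V$ the transform $MT$ is \emph{free} (this is Corollary \ref{onebase}, itself deduced in the paper from the very proposition you are proving), so there is no module over $T$ with ideal of minors $\m_T^{n-1}$ to which an inductive hypothesis could apply; nor do you give any mechanism for ``lifting'' an equality of ideals from $T$ back to $R$. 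So the key step is missing.

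For comparison, the paper's argument is elementary and avoids transforms entirely: since $M=MV\cap F$ and every $S=R[\frac{\m}{x}]$ is contained in $V$, $M$ is contracted from every such $S$, hence by Proposition \ref{contracted} so is $I=I(M)$, giving $(I:x)=(I:\m)$ for every minimal generator $x$ of $\m$. If $t$ is least with $\m^t\subseteq I$ and $t>n$, pick $z\in I$ of order exactly $t-1$ (a suitable multiple of $\det(C)$); algebraic closedness of $k$ lets one factor its leading form in $gr_\m(R)\cong k[X,Y]$ into $t-1$ linear forms, so $z\equiv z_1\cdots z_{t-1}$ modulo $\m^t\subseteq I$ with each $z_k$ a minimal generator of $\m$, and coloning out the $z_k$ one at a time via $(I:z_k)=(I:\m)$ yields $\m^{t-1}\subseteq I$, contradicting the choice of $t$; hence $\m^n\subseteq I\subseteq\m^n$. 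Note that the two hypotheses you correctly identified as essential are used exactly there: contraction from all $S$ gives the colon identities, and algebraic closedness gives the linear factorization. Your proposal uses contraction from $V$ only through Lemma \ref{compute} and never exploits contraction from every $S$, which is why it has no leverage to pin $I(M)$ down on the nose.
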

\begin{proof} 
Choose $C \subseteq F$ as in Lemma \ref{Clem}, so that $ord_R(det(C)) = n$
and set $I = I(M)$ so that $det(C) \in I \subseteq \m^n$. Since $I$ is $m$-primary,
there is a smallest $t$ so that $\m^t \subseteq I$. We will show that $t > n$ leads
to a contradiction.

Observe that since $M = MV \cap F$ and all $S = R[\frac{\m}{x}]$ are contained
in $V$, we also have $M = MS \cap F$ for all $x \in \m \backslash \m^2$. It then
follows from Proposition \ref{contracted} that $I$ is also contracted from all
$S = R[\frac{\m}{x}]$. Thus, again by Proposition \ref{contracted}, $I:x = I:\m$ for
every minimal generator $x$ of $\m$.

If now, $t > n$, there exists $z \in I$ of order $t-1$. 
For instance $z$ could be chosen to be an appropriate multiple of $det(C)$.
Since $k$ is assumed to
be algebraically closed the image of $z$ in the graded ring $gr_\m(R) \cong k[X,Y]$
is a product of $t-1$ linear factors. Lifting back to $R$ shows that $z-z_1z_2\cdots z_{t-1}
\in \m^t$ where each $z_k$ is a minimal generator of $\m$. Thus $z_1\cdots z_{t-1} \in I$. Since each $I : z_k = I : \m$, it follows that $\m^{t-1} \subseteq I$, contradicting
choice of $t$.

Thus $t \leq n$ and so $I = \m^n$.
\end{proof}

Recall - see Proposition 6.8.2 of Chapter 0 of \cite{GrtDdn1971} - that 
a local ring $(R,\m)$ admits a faithfully flat local overring $(\tilde{R},\tilde{\m})$
such that $\m \tilde{R} = \tilde{\m}$ and such that $\tilde{R}$ has algebraically closed residue
field. If $R$ is a regular local ring, then the dimension formula - see
Theorem 15.1 of \cite{Mts1986} - implies that so is $\tilde{R}$. We will use this to drop that
requirement that $k$ be algebraically closed from Proposition \ref{algclosed}.

\begin{proposition}\label{genlcase}
If $M = MV \cap F$ and $ord_R(M) = n$, then
$I(M) = \m^n$.
\end{proposition}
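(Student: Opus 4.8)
The plan is to reduce to the algebraically closed case already handled in Proposition \ref{algclosed} by ascending to the faithfully flat local overring $(\tilde R,\tilde\m)$ with $\m\tilde R=\tilde\m$ and algebraically closed residue field, whose existence and regularity were recalled just before the statement. Write $\tilde F = F\otimes_R\tilde R$ and $\tilde M = M\otimes_R\tilde R$; since $\tilde R$ is flat over $R$, $\tilde M$ injects into $\tilde F$, and since $F/M$ has finite length and $\m\tilde R=\tilde\m$, the quotient $\tilde F/\tilde M$ again has finite length, so $\tilde F$ is (a free module and) the double dual of $\tilde M$. Likewise the matrix representing $M\subseteq F$ represents $\tilde M\subseteq\tilde F$, so $I(\tilde M)=I(M)\tilde R$, and hence $\ord_{\tilde R}(\tilde M)=\ord_R(M)=n$ because $\m^j\tilde R=\tilde\m^j$ for all $j$.

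Next I would check that the hypothesis $M=MV\cap F$ is inherited, i.e. that $\tilde M = \tilde M\tilde V\cap\tilde F$ where $\tilde V=V_{\tilde R}$ is the order valuation ring of $\tilde R$. The clean way to see this is via Lemma \ref{compute}: pick a free submodule $C\subseteq M$ as in Lemma \ref{Clem}, so that $MV\cap F = CV\cap F = (\m^nC:_F\det(C))$ with $\ord_R(\det(C))=n$. Base change $C\subseteq F$ to $\tilde C=C\otimes_R\tilde R\subseteq\tilde F$; these are free of equal rank, the representing matrix is unchanged, so $\ord_{\tilde R}(\det(\tilde C))=n$ as well, and by flatness $(\m^nC:_F\det(C))\otimes_R\tilde R = (\tilde\m^n\tilde C:_{\tilde F}\det(\tilde C))$. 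Applying Lemma \ref{compute} over $\tilde R$ identifies the right-hand side with $\tilde C\tilde V\cap\tilde F = \tilde M\tilde V\cap\tilde F$, while the left-hand side is $(MV\cap F)\otimes_R\tilde R = M\otimes_R\tilde R = \tilde M$. So $\tilde M=\tilde M\tilde V\cap\tilde F$, and Proposition \ref{algclosed} applies over $\tilde R$ to give $I(\tilde M)=\tilde\m^n$.

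Finally I would descend: from $I(M)\tilde R = I(\tilde M) = \tilde\m^n = \m^n\tilde R$ and faithful flatness of $\tilde R$ over $R$, the contraction of ideals gives $I(M)=I(M)\tilde R\cap R = \m^n\tilde R\cap R=\m^n$. (Equivalently, $I(M)$ and $\m^n$ become equal after a faithfully flat base change, hence are equal.) The only real point requiring care — and thus the main obstacle — is the compatibility of the colon-module construction in Lemma \ref{compute} with the base change $-\otimes_R\tilde R$, i.e. that $(\m^nC:_F\det(C))\otimes_R\tilde R=(\tilde\m^n\tilde C:_{\tilde F}\det(\tilde C))$; this is a standard consequence of flatness since $\det(C)$ acts as a nonzerodivisor and all modules are finitely generated, but it is the step on which the whole reduction hinges. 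Everything else — inheritance of the double-dual property, the equality of minor ideals, and the descent of the ideal equality — is routine flatness bookkeeping.
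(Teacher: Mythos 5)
Your proposal is correct and follows essentially the same route as the paper: ascend to the faithfully flat local overring with algebraically closed residue field, transfer the hypothesis $M = MV \cap F$ using the free module $C$ from Lemma \ref{Clem} and the description $M = (\m^nC :_F \det(C))$ from Lemma \ref{compute}, apply Proposition \ref{algclosed} over $\tilde{R}$, and descend by faithful flatness. The one step you flag as the main obstacle --- that the colon module $(\m^nC :_F \det(C))$ is compatible with the base change --- is precisely the point the paper works out in detail via the equational criterion for flatness (Theorem 7.6 of \cite{Mts1986}), rather than quoting it as a standard flatness fact.
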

\begin{proof} 
Let $(\tilde{R},\tilde{\m})$ be a two-dimensional regular local ring with algebraically
closed residue field that is a faithfully flat overring of $R$ with
$\m \tilde{R} = \n$. Set $\tilde{M} = M \otimes_R \tilde{R}$ and denote by $\tilde{V}$ the order valuation
ring of $\tilde{R}$.

Observe that $\tilde{M} \subseteq \tilde{F} = F \otimes_R \tilde{R}$ (by flatness) and so is a torsion
free $\tilde{R}$-module with the quotient $\tilde{F}/\tilde{M}$ of finite
length (equal to the length of $F/M$). Also observe that a representing matrix
for $M$ over $R$ also represents $\tilde{M}$ over $\tilde{R}$ (when its entries are regarded
as coming from $\tilde{R}$). In particular, $I(\tilde{M}) = I(M)\tilde{R}$ and so by faithful flatness
$I(M) = I(\tilde{M}) \cap R$. Thus to show that $I(M)$ is a power of $\m$ it suffices
to see that $I(\tilde{M})$ is a power of $\tilde{\m}$.

This is done by appealing to Proposition \ref{algclosed}. The only thing that
needs verification is that $\tilde{M}$ is contracted from $\tilde{V}$. Choose
$C \subseteq M$ as in Lemma \ref{Clem}. 
By Lemma \ref{compute} we have $(\m^nC:_F det(C)) = M$. Let $\tilde{C}$ denote the
$\tilde{R}$-submodule of $\tilde{F}$ generated by $C$. We claim that $(\tilde{\m}^n\tilde{C}:_{\tilde{F}} det(\tilde{C})) = \tilde{M}$.
Given this, it follows from Lemma \ref{compute} that $\tilde{M}$ is
contracted from $\tilde{V}$, as desired.

Note that $det(\tilde{C}) = det(C)$ (they are both represented by the
same matrix with entries regarded in $\tilde{R}$ and $R$ respectively).
Also note that if $\m^nC$ is generated by $f_1,\cdots,f_k \in F$, then,
the $\tilde{R}$-module generated by these (regarded as elements
of $\tilde{F}$) is exactly $\tilde{\m}^n\tilde{C}$.

To show that $\tilde{M} \subseteq (\tilde{\m}^n\tilde{C}:_{\tilde{F}}det(\tilde{C}))$,
note that $det(C)M \subseteq \m^nC$. Since the $\tilde{R}$-modules generated by
$M$ and $\m^nC$ are exactly $\tilde{M}$ and $\tilde{\m}^n\tilde{C}$ (and $det(C) = det(\tilde{C})$), the desired
containment is clear.

To show the opposite containment, consider the sets
\begin{eqnarray*}
S &=& \{(f,z_1,\cdots,z_k) \in F \oplus R^k : det(C)f = z_1f_1 + \cdots + z_kf_k\}, {\text {~~and}}\\
\tilde{S} &=& \{(\tilde{f},\tilde{z}_1,\cdots,\tilde{z}_k) \in \tilde{F} \oplus \tilde{R}^k : det(C)\tilde{f} = \tilde{z}_1f_1 + \cdots + \tilde{z}_kf_k\}.
\end{eqnarray*}
Choosing a basis of $F$ identifies elements
of $F$ with elements of $R^r$ (where $r = rk_R(F)$). Suppose that $f$ is identified with the column vector $[x_1 x_2 \cdots x_r]^T$ and the $f_j$ with the column vectors $[f_{1j} f_{2j} \cdots f_{rj}]^T$.
Thus $S$ (respectively $\tilde{S}$) is the solution set in $R^{r+k}$ (respectively
$\tilde{R}^{r+k}$) of the set of
homogeneous linear equations
$$
det(C)\left[ \begin{array}{c}
              x_1 \\ x_2 \\ \vdots \\ x_r
              \end{array} \right]
=
\left[ 
\begin{array}{cccc}
f_{11} & f_{12} & \cdots & f_{1k} \\
f_{21} & f_{22} & \cdots & f_{2k}  \\
\vdots & \vdots & \ddots & \vdots \\
f_{r1} & f_{r2} & \cdots & f_{rk}
\end{array}\right]
\left[ \begin{array}{c}
              z_1 \\ z_2 \\ \vdots \\ \vdots \\ z_k
              \end{array} \right]
$$
in the variables $x_1,\cdots,x_r$ and $z_1,\cdots,z_k$ and coefficients given by $det(C)$ and the $f_{ij}$.
The equational criterion for flatness - see Theorem 7.6 of \cite{Mts1986} - 
now implies that any element of $\tilde{S}$ is an $\tilde{R}$-linear combination
of elements of $S$. In particular, projecting an element of $\tilde{S}$ onto
its $\tilde{F}$ part gives an $\tilde{R}$-linear combination of projections
of elements of $S$ onto their $F$ parts. Thus any element of $(\tilde{\m}^n\tilde{C}:_{\tilde{F}} det(\tilde{C}))$ is an $\tilde{R}$-linear combination
of elements of $(\m^nC :_F det(C)) = M$. Since $\tilde{M}$ is exactly the
set of $\tilde{R}$-combinations of elements of $M$, we have established the other
containment and finished the proof.
\end{proof}

\begin{remark}
A natural question that arises from the proof of Proposition \ref{algclosed}
is whether an ideal that is contracted from all $S = R[\frac{\m}{x}]$ is necessarily
a power of $\m$ (even without the residue field being algebraically closed). We give
an example to show that this need not be the case. Consider, for instance, the ideal
$I = (x^3,x^2y,x^2+y^2)$ in the ring $R = {\mathbb R}[| x,y |]$. It is easy to
check that $(I:x) = \m^2 = (I:\m)$ and so $I$ is contracted from $S =  R[\frac{\m}{x}]$.
Further $IS = x^2(x,1+\frac{y^2}{x^2})S$ which is the product of a principal ideal
and the maximal ideal $(x,1+\frac{y^2}{x^2})S$ and therefore integrally closed. It follows
that $I$ itself is integrally closed.

Further we claim that for any $z  \in \m \backslash \m^2$, we have that $(I:z) = (I:\m)$ ( = $\m^2$), so that $I$ is contracted from $S = R[\frac{\m}{z}]$.  To prove this claim, write
$z = \sum_{n \geq 1} p_n(x,y)$ where $p_n$
is a homogeneous polynomial of degree $n$
and $p_1(x,y) \neq 0$.
Suppose that $u \in (I:z)$. Note that $u \in \m$ and hence we may write $u = \sum_{n \geq 1} q_n(x,y)$ where $q_n$
is a homogeneous polynomial of degree $n$.
It suffices to see that $u \in \m^2$, or equivalently, that $q_1(x,y) = 0$.

Observe that the degree 2 component of any
element of $I$ is a scalar multiple of
$x^2+y^2$ while the degree 2 component of
$uz$ is $q_1(x,y)p_1(x,y)$. Since $x^2+y^2$
does not factor into linear polynomials over
${\mathbb R}$, it follows that $q_1(x,y) = 0$, as desired.
\end{remark}

\begin{corollary}\label{onebase}
If $M = MV \cap F$, then for any quadratic transform $T$ of $R$ other than $R$ itself,
$MT$ is a free $T$-module.
\end{corollary}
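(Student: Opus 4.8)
The plan is to deduce the freeness of $MT$ from the principality of an ideal of maximal minors via Lemma \ref{prinfree}, using Proposition \ref{genlcase} to identify that ideal as a power of $\m$. Since $F = M^{**}$ is free over $R$ of rank $r = rk_R(M)$, the $T$-module $FT$ is free of rank $r$, and $MT \subseteq FT$ is a finitely generated, torsion-free $T$-submodule of the same rank $r$. As $T$ is a two-dimensional regular local ring, in particular a Noetherian local domain, Lemma \ref{prinfree} reduces the statement to showing that $I_{FT}(MT)$ is a principal ideal of $T$.

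To compute $I_{FT}(MT)$, take a matrix $A$ over $R$ representing the inclusion $M \subseteq F$ with respect to a basis of $F$ and a minimal generating set of $M$; then $A$, with entries regarded in $T$, represents $MT \subseteq FT$ with respect to the induced basis of $FT$, its columns now generating $MT$ over $T$ (not necessarily minimally). Since the ideal of maximal minors of a representing matrix is unchanged under enlarging the generating set, $I_{FT}(MT)$ is the ideal generated over $T$ by the maximal minors of $A$, i.e. $I_{FT}(MT) = I(M)T$. By Proposition \ref{genlcase}, $I(M) = \m^n$ with $n = ord_R(M)$, so $I_{FT}(MT) = \m^n T = (\m T)^n$.

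It remains to check that $\m T$ is principal when $T \neq R$. Write the chain $R = T_0 \subset T_1 \subset \cdots \subset T_m = T$ with $m \geq 1$ and each $T_{i+1}$ a first quadratic transform of $T_i$. The ring $T_1$ is either a localization of $R[\frac{\m}{x}]$ for some minimal generator $x$ of $\m$, or the localization of $R[\frac{\m}{y}]$ at $(y,\frac{x}{y})R[\frac{\m}{y}]$; in either case $\m T_1$ is generated by a single element, so, since $T \supseteq T_1$, the ideal $\m T = (\m T_1)T$ is principal, and hence so is $(\m T)^n$. Together with the previous paragraph this shows $MT$ is free.

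The conceptual content of the corollary is entirely contained in Proposition \ref{genlcase}, so I do not expect a real obstacle here. The only mild subtlety is the identification $I_{FT}(MT) = I(M)T$, which rests on the standard fact that the ideal of maximal minors of a matrix representing $MT \subseteq FT$ does not depend on the chosen generating set of $MT$, together with the observation that a representing matrix over $R$ stays a representing matrix after base change to $T$.
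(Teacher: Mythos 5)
Your proposal is correct and follows essentially the same route as the paper: apply Proposition \ref{genlcase} to get $I(M)=\m^n$, note $I_{FT}(MT)=\m^nT$ is principal since $\m T$ is principal for any quadratic transform $T\neq R$, and conclude freeness via Lemma \ref{prinfree}. The extra details you supply (independence of the ideal of maximal minors from the generating set, and the explicit check that $\m T_1$ is principal) are exactly the steps the paper leaves implicit.
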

\begin{proof} 
By Proposition \ref{genlcase}, it follows that $I(M) = \m^n$ with $n = ord_R(M)$.
Thus any quadratic transform $T$ of $R$ other than $R$ itself, $I_{FT}(MT) = \m^nT$ is principal
and so by Lemma \ref{prinfree}, $MT$ is a free $T$-module.
\end{proof}

We now have a useful characterisation of
modules contracted from the order valuation
ring.

\begin{theorem}\label{charcont}
The following conditions are equivalent 
for $M$:
\begin{itemize}
\item[(1)] $M = MV \cap F.$
\item[(2)] $M$ is contracted from every $S = R[\frac{\m}{x}]$ and $I(M)$ is
a power of $\m$.
\item[(3)] $M$ is contracted from some $S = R[\frac{\m}{x}]$ and $I(M)$ is
a power of $\m$.
\item[(4)] $M$ is integrally closed and $I(M)$ is
a power of $\m$.
\end{itemize}
\end{theorem}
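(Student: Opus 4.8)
The plan is to establish the cycle of implications $(1)\Rightarrow(2)\Rightarrow(3)\Rightarrow(4)\Rightarrow(1)$, drawing on the groundwork laid earlier in this subsection together with Proposition~\ref{contracted} and Theorem~\ref{Kdymain}. The implication $(1)\Rightarrow(2)$ is essentially already contained in the opening paragraph of the proof of Proposition~\ref{algclosed}: if $M=MV\cap F$, then since every $S=R[\frac{\m}{x}]$ sits inside $V$ we get $M\subseteq MS\cap F\subseteq MV\cap F=M$, so $M$ is contracted from every such $S$; and the assertion that $I(M)$ is a power of $\m$ is exactly Proposition~\ref{genlcase}. The implication $(2)\Rightarrow(3)$ is trivial since there exists at least one minimal generator $x$ of $\m$.

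For $(3)\Rightarrow(4)$, I would argue that a module contracted from some $S=R[\frac{\m}{x}]$ with $I(M)$ a power of $\m$ is integrally closed. The natural route is via Theorem~\ref{Kdymain} and the structure of contracted modules: one knows $MS$ is the transform of $M$, and when $I(M)=\m^n$ the maximal minors of $M$ over $S$ generate the principal ideal $\m^nS$, so by Lemma~\ref{prinfree} the transform $MS$ is free over $S$ and in particular integrally closed there. Combined with $M=MS\cap F$, and using that intersecting an integrally closed submodule of $FS$ with $F$ (itself integrally closed, being free) produces an integrally closed module, one concludes $\overline{M}\subseteq \overline{MS}\cap \overline{F}=MS\cap F=M$; here I would lean on the valuative criterion of Theorem~\ref{basic} to justify that integral closure commutes with this intersection, since $\overline{M}=\bigcap_V MV$ over all DVRs $V\supseteq R$, and each such $V$ either contains $S$ or contains $R$ with $\m V$ principal.

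The implication $(4)\Rightarrow(1)$ is where the real content lies and is the step I expect to be the main obstacle. Assume $M$ is integrally closed with $I(M)=\m^n$. By Theorem~\ref{Kdymain}, for most $x\in\m\setminus\m^2$ the module $M$ is contracted from $S=R[\frac{\m}{x}]$, so $M=MS\cap F$, and by Proposition~\ref{contracted} we then have $\operatorname{ord}_R(M)=\nu_R(M)-\rk_R(M)$; moreover $\operatorname{ord}_R(M)=\operatorname{ord}_R(I(M))=n$. The point to push through is upgrading "contracted from $S$" to "contracted from $V$", i.e.\ showing $MS\cap F=MV\cap F$. Using Lemma~\ref{Clem}, pick a free $C\subseteq M$ of rank $\rk_R(M)$ with $CV\cap F=MV\cap F$ and $\operatorname{ord}_R(\det C)=n$; by Lemma~\ref{compute}, $MV\cap F=(\m^nC:_F\det C)$. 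One must then check that this coincides with $M$. Since $\det C$ generates $I_F(C)$ and $C\subseteq M$ forces $\det C\in I(M)=\m^n$, while the hypothesis $I(M)=\m^n$ (a power, not merely $\m$-primary) pins down the colength of $F/M$ via the Hoskin--Deligne-type bookkeeping available once we know the transforms at all first quadratic transforms of $R$ are free (Corollary~\ref{onebase} gives freeness of $MV\cap F$ at every proper quadratic transform, and one wants the converse direction of that computation). Concretely, I would compare $\lambda_R(F/M)$ with $\lambda_R(F/(MV\cap F))$: both modules have the same image $\m^n$ under $I_F(-)$ and the same transform in every $S=R[\frac{\m}{x}]$ (namely the free module $CS$, since $I(M)=\m^n$ makes $MS$ free), and since a module contracted from $S$ is determined by its transform together with the requirement of being contracted, the equality $M=MV\cap F$ follows. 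The delicate part is verifying that $MV\cap F$ is itself contracted from every $S$ and has the same transform as $M$ there; this is a bookkeeping argument with Lemma~\ref{compute} applied both to $M\subseteq F$ and to $C\subseteq F$, using $\det C\in\m^n$ and $\m^n V=\det(C)V$.
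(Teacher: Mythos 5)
Your implications $(1)\Rightarrow(2)\Rightarrow(3)$ coincide with the paper's. Your $(3)\Rightarrow(4)$ takes a genuinely different route from the paper: the paper gets integrality directly from $(1)$, since by the valuative criterion of Theorem~\ref{basic} and normality of $R$ one has $\overline{M}\subseteq F\cap MV$, so $M=MV\cap F$ forces $\overline{M}=M$; you instead argue that $I(M)S=x^nS$ principal makes $MS$ integrally closed over $S$ and then intersect with $F$. That can be made to work, but note that Lemma~\ref{prinfree} is stated for local rings (so you must pass to the localisations of $S$ at its maximal ideals) and you need normality of $S$ and the fact that integral closure is compatible with these localisations --- extra machinery the paper's route avoids.

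The genuine gap is in your $(4)\Rightarrow(1)$, which is the heart of the theorem and is only gestured at. Your key assertion that $M$ and $MV\cap F$ have ``the same transform in every $S=R[\frac{\m}{x}]$, namely the free module $CS$'' is false as stated: already for $M=\m\subseteq F=R$ take $C=yR$, which satisfies $CV=\m V$ and $\ord_R(\det(C))=1$, yet $CS=yS\subsetneq xS=\m S=MS$, because $y/x$ is a unit of $V$ but not of $S$. Thus the equality $(MV\cap F)S=MS$ --- which is exactly what would let you conclude $MV\cap F\subseteq MS\cap F=M$ --- is never proved; you also never actually establish $I(MV\cap F)=\m^n$ (it follows from Proposition~\ref{genlcase}, since $MV\cap F$ is contracted from $V$ and has the same order as $M$ by Lemma~\ref{ordlemma}), and the appeals to ``Hoskin--Deligne-type bookkeeping'' and to a ``converse direction'' of Corollary~\ref{onebase} are not arguments. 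The paper closes precisely this gap with Proposition~\ref{contquad}: if $M$ is contracted from $S$, then $(MV\cap F)/M\cong\bigoplus_T (MT)^{**}/MT$ over the first quadratic transforms $T$, and since $I_{FT}(MT)=I(M)T=\m^nT$ is principal, each $MT$ is free by Lemma~\ref{prinfree}, so all summands vanish and $M=MV\cap F$; you never invoke Proposition~\ref{contquad} or any substitute for it. (Your sketch could alternatively be repaired by showing $(MV\cap F)S=MS$ directly: both are submodules of $FS$ containing one another in one direction, with the same ideal of maximal minors $x^nS$ by Proposition~\ref{genlcase}, and both locally free of full rank at every maximal ideal of $S$, so the determinant of the inclusion is locally a unit --- but some such argument must actually be supplied.)
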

\begin{proof} (1) $\Rightarrow$ (2) holds since each $S = R[\frac{\m}{x}] \subseteq V,$ and by an application of Proposition \ref{genlcase} while (2) $\Rightarrow$ (3) is obvious. To show that (3) $\Rightarrow (1)$, observe first that for all first quadratic transforms $T$ of $R$, the $T$-module $MT$
is free since $I_{FT}(MT) = I(M)T$ is principal. Now apply Proposition \ref{contquad} to
conclude that $M = MV \cap F.$
To see that (1) $\Rightarrow (4)$ observe
that the valuative criterion of Theorem \ref{basic} implies that $M \subseteq \overline{M} 
= F \cap (\cap_V MV)$ over all valuation rings $V$
of $K$ containing $R$. It follows that if
$M = MV \cap F$ for any one valuation ring
$V \supseteq R$, then $M$ is integrally closed. The other part of (4) follows from
Proposition \ref{genlcase}. Finally (4) $\Rightarrow$ (3) is clear from Theorem \ref{Kdymain}.
\end{proof}

We will have occasion in the sequel to use the following corollary of Theorem~\ref{charcont}.

\begin{corollary}\label{stable}
For any $M$ and any $n \geq 1$,
$S_n(MV \cap F) = S_n(M)V \cap S_n(F)$.
\end{corollary}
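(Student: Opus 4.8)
The plan is to reduce the statement to the characterisation of Theorem \ref{charcont} by checking its conditions for the module $N := S_n(M)$ sitting inside $S_n(F)$. First I would recall that $S_n(F) = S_n(M)^{**}$: indeed $S_n(F)$ is free (a symmetric power of a free module), it contains $S_n(M)$, and the quotient $S_n(F)/S_n(M)$ has finite length (since $F/M$ does, and $\Sym$ of a surjection with finite-length kernel still has finite-length kernel in each graded piece over a Noetherian local ring); by the uniqueness statement for double duals in \S\ref{Kdysum} this identifies $S_n(F)$ with $S_n(M)^{**}$. So the double dual $F$-analogue for $S_n(M)$ is $S_n(F)$, and the equality we want is precisely ``$S_n(M) = S_n(M)\widetilde V \cap S_n(F)$'' in the notation $V=V_R$, i.e.\ condition (1) of Theorem \ref{charcont} for $N$.

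Next I would verify conditions (3) and (4) of Theorem \ref{charcont} for $N = S_n(M)$. For condition (4): by Theorem \ref{Kdymain}, if $M$ is integrally closed then so is each $S_n(M)$ — but wait, we are \emph{not} assuming $M$ is integrally closed here, only that $M = MV \cap F$. So instead I would argue directly. By Theorem \ref{charcont}, $M = MV\cap F$ implies $M$ is integrally closed and $I(M) = \m^n$... no, $I(M)=\m^{\ord_R(M)}$. Then by Theorem \ref{Kdymain}, $S_n(M)$ is integrally closed. It remains to identify $I(S_n(M))$ and check it is a power of $\m$. The cleanest route: $M = MV\cap F$ means $M$ is contracted from $V$, so (by the valuative criterion, or directly) $MV$ is a free $V$-module and $S_n(M)V = S_n(MV)$ is the $n$-th symmetric power of that free $V$-module, hence free; and the proper transforms behave multiplicatively enough that $S_n(M)$ being ``built from $V$'' forces $I(S_n(M))$ to again be $\m$-primary with order a power of $\m$. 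Concretely, I would use Lemma \ref{prinfree} together with Corollary \ref{onebase}: since $M = MV\cap F$, for every quadratic transform $T \neq R$ the module $MT$ is free, hence $S_n(M)T = S_n(MT)$ is free, so $I_{S_n(F)T}(S_n(M)T)$ is principal; feeding this into condition (3) of Theorem \ref{charcont} via Proposition \ref{contquad} (applied to $S_n(M) \subseteq S_n(F)$) would give that $S_n(M)$ is contracted from some $S = R[\frac{\m}{x}]$ and, together with the order computation, that $I(S_n(M))$ is a power of $\m$.

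The main obstacle, as flagged above, is the bookkeeping that $I(S_n(M))$ is actually a power of $\m$ — equivalently, that $\ord$ and the symmetric-power operation interact well. The safest implementation I would use is the faithfully-flat reduction of Proposition \ref{genlcase}: pass to $(\widetilde R, \widetilde\m)$ with algebraically closed residue field, where $\widetilde M = M\otimes_R\widetilde R$ is still contracted from $\widetilde V$ (this was shown in the proof of Proposition \ref{genlcase}), apply Proposition \ref{algclosed} to get $I(\widetilde M)=\widetilde\m^{\,n}$, note that $S_n(-)$ commutes with the flat base change $R\to\widetilde R$ and with intersection-in-a-free-module (by the equational criterion for flatness, exactly as in the proof of Proposition \ref{genlcase}), so that $S_n(\widetilde M) = S_n(M)\otimes_R\widetilde R$ and $I(S_n(\widetilde M)) = I(S_n(M))\widetilde R$; then once condition (1) of Theorem \ref{charcont} is established for $S_n(\widetilde M)$ over $\widetilde R$, the identity $S_n(M) = S_n(M)V\cap S_n(F)$ over $R$ descends by faithful flatness (intersecting back with $R$, resp.\ with $S_n(F)$). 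In short, the proof is: identify $S_n(F)$ as the double dual of $S_n(M)$, establish freeness of $S_n(M)$ in all proper quadratic transforms from that of $M$, invoke Theorem \ref{charcont}(3)$\Rightarrow$(1), handling the ``$I(S_n(M))$ is a power of $\m$'' point by the algebraically-closed-residue-field case plus faithfully flat descent.
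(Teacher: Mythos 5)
Your proposal proves only a special case of the corollary, and that is the genuine gap. The statement is asserted for an \emph{arbitrary} finitely generated torsion-free $M$, and its content is $S_n(MV\cap F)=S_n(M)V\cap S_n(F)$; you silently replace this by the case $M=MV\cap F$, even treating that as a standing hypothesis (``we are not assuming $M$ is integrally closed here, only that $M=MV\cap F$''). For a general $M$ the equality you set out to verify, namely $S_n(M)=S_n(M)V\cap S_n(F)$, is false -- already at $n=1$ it would assert $M=MV\cap F$. What is needed, and what the paper does after the contracted case, is a short reduction: the inclusion $S_n(MV\cap F)\subseteq S_n(M)V\cap S_n(F)$ is clear because $MV\cap F\subseteq MV$, and for the reverse one applies the contracted case to $M'=MV\cap F$ (which is itself contracted from $V$ since $M'V=MV$), so that $S_n(M')$ is contracted from $V$ and contains $S_n(M)$, hence contains $S_n(M)V\cap S_n(F)$. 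This step is easy but absent from your proposal, so as written you establish a weaker statement than the corollary.

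For the contracted case your concrete route is essentially sound and is a mild variant of the paper's argument. The paper goes: $M=MV\cap F$ implies $M$ is integrally closed with $I(M)$ a power of $\m$ (Theorem \ref{charcont}), hence $S_n(M)$ is integrally closed (Theorem \ref{Kdymain}) with $I(S_n(M))$ a power of $I(M)$ and so of $\m$, and then invokes (4)$\Rightarrow$(1) of Theorem \ref{charcont}. You instead observe that $S_n(M)T=S_n(MT)$ is free in every first quadratic transform $T$ (Corollary \ref{onebase} plus Lemma \ref{prinfree}) and feed this into Proposition \ref{contquad}; this works and has the advantage of bypassing the bookkeeping about $I(S_n(M))$, but with two corrections. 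First, Proposition \ref{contquad} does not ``give that $S_n(M)$ is contracted from some $S=R[\frac{\m}{x}]$'' -- it \emph{requires} that as a hypothesis; the contractedness comes instead from the integral closedness of $S_n(M)$ via Theorem \ref{Kdymain} (or Proposition \ref{contracted}). Second, your fallback ``safest implementation'' by faithfully flat base change is circular as described: you never say how condition (1) of Theorem \ref{charcont} is to be established for $S_n(\tilde{M})$ over $\tilde{R}$, and passing to an algebraically closed residue field does not by itself settle whether $I(S_n(\tilde{M}))$ is a power of $\tilde{\m}$, which is exactly the point you flagged as the obstacle. With the first route repaired as above and the reduction from general $M$ added, the argument is complete.
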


\begin{proof}
Begin by noting that the characterisation
of the double dual implies that $S_n(F)$
is the double dual of $S_n(M)$.
First suppose that $M$ is contracted from $V$, so that $M = MV \cap F$. In this case,
we need to see that so are all $S_n(M)$, for
$n \geq 1$.
By Theorem \ref{charcont}, $M$
is integrally closed (with $I(M)$ a power of $\m$) and then by Theorem \ref{Kdymain} all $S_n(M)$ are integrally
closed. Further it is easy to see that
$I(S_n(M))$ is, in general, a power of $I(M)$
(with exponent given by an appropriate binomial coefficient) and so, in this case, is a power of $\m$. By Theorem \ref{charcont} again, $S_n(M)$ is contracted
from $V$.

For a general $M$, it is clear that 
$S_n(MV \cap F) \subseteq S_n(M)V \cap S_n(F)$.
For the opposite inclusion, use that $S_n(MV \cap F)$ is contracted from $V$ (by the previous paragraph) and contains $S_n(M)$
(obviously) to conclude that it contains
$S_n(M)V \cap S_n(F)$.
\end{proof}

\subsection{The analogue of the Hoskin-Deligne formula}

We recall that the Hoskin-Deligne formula for integrally closed ideals in $R$ asserts
that if $I$ is an integrally closed $\m$-primary ideal of $R$, then
$$
\lambda_R(\frac{R}{I}) = \sum_{T \geq R} \binom{ord_T(I^T)+1}{2} [T:R],
$$
where the sum is over all quadratic transforms $T$ of $R$, $ord_T(\cdot)$ denotes
the order in the discrete valuation ring $V_T$ associated to the maximal ideal of $T$, and
$[T:R]$ is the residue field extension degree.

We begin the proof of the analogue of the Hoskin-Deligne length formula for integrally
closed modules by proving a preparatory lemma for the basic inductive step.

\begin{lemma}\label{ordlemma} $ord_R(M) = \lambda_V(FV/MV)$.
\end{lemma}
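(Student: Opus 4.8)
The plan is to compute both sides via a free submodule and reduce to the well-understood ideal case. Recall that $ord_R(M) = ord_R(I(M))$ by definition, where $I(M) \subseteq R$ is the ideal of maximal minors of a representing matrix of $M \subseteq F$. The first step is to invoke Lemma~\ref{Clem} to obtain a free submodule $C \subseteq M$ of rank $r = rk_R(M)$ with $CV = MV$; then $FV/MV = FV/CV$, so it suffices to prove the identity for the free module $C$ in place of $M$. (Strictly, the statement of Lemma~\ref{Clem} already cites Lemma~\ref{ordlemma}, so to avoid circularity I would prove Lemma~\ref{ordlemma} directly and have Lemma~\ref{Clem} appeal to it afterwards — i.e. the argument below should not use $ord_R(det(C)) = ord_R(M)$ as an input, but rather establishes a special case of it.)

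Next I would reduce to the case where $C$ is free. Choose bases so that $C \subseteq F$ is represented by an $r \times r$ matrix over $R$ with nonzero determinant $d = det(C)$. Over the PID $V = V_R$, the Smith normal form (elementary divisor theorem) gives $V$-bases $e_1,\dots,e_r$ of $FV$ and $a_1 e_1,\dots,a_r e_r$ of $CV$, where the $a_i \in V$ are the invariant factors. Then $FV/CV \cong \bigoplus_{i=1}^r V/(a_i)$, so $\lambda_V(FV/CV) = \sum_i ord_R(a_i) = ord_R(\prod_i a_i)$. On the other hand $\prod_i a_i$ and $d$ generate the same ideal of $V$ (both equal the zeroth Fitting ideal of $FV/CV$, or: the change-of-basis matrices have unit determinant in $V$), so $ord_R(\prod_i a_i) = ord_R(d) = ord_V(dV)$. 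Thus $\lambda_V(FV/MV) = ord_R(det(C))$.

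It remains to identify $ord_R(det(C))$ with $ord_R(M) = ord_R(I(M))$. Here $I(M)$ is the ideal generated by the maximal minors of the (wider) representing matrix of $M \subseteq F$; since $C$ is obtained by selecting $r$ of the generators of $M$, $det(C)$ is one of those maximal minors, so $(det(C)) \subseteq I(M)$ and hence $ord_R(I(M)) \le ord_R(det(C))$. For the reverse inequality I use that $CV = MV$: passing to $V$, every maximal minor of the matrix for $M$ becomes a maximal minor of a matrix whose columns generate $MV = CV$, and each such minor is a $V$-multiple of $det(C)$ (expand any competing $r \times r$ minor using that its columns are $V$-combinations of the columns of $C$). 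Hence $I(M)V \subseteq det(C)V$, so $ord_R(I(M)) = ord_V(I(M)V) \ge ord_V(det(C)V) = ord_R(det(C))$, giving equality and completing the proof.

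The main obstacle is the bookkeeping around potential circularity with Lemma~\ref{Clem} and making the two "multiples of $det(C)$" comparisons clean; concretely, the crux is the elementary but slightly fiddly fact that over the DVR $V$ the order of $det(C)$ equals the length of $FV/CV$, which is just Smith normal form, together with the observation that contracting the minor ideal to $V$ collapses it onto the principal ideal generated by $det(C)$ once $CV = MV$.
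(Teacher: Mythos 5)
Your proposal is correct and follows essentially the same route as the paper: both reduce to the DVR $V$, identify $ord_R(M)=ord_R(I(M))$ with the colength of $I(M)V$, recognize $I(M)V$ as the determinant ideal of $MV\subseteq FV$, and conclude by the structure theorem (Smith normal form) over the PID $V$. Your explicit two-inclusion check that $I(M)V=\det(C)V$, and your care to avoid circularity with Lemma~\ref{Clem}, are just more detailed versions of the paper's appeal to the invariance of the ideal of maximal minors under base change and choice of generators.
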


\begin{proof}
By definition, $ord_R(M) = ord_R(I(M))$. To compute this, we may extend $I(M)$
to $V$ and take its colength. Note that 
$MV \subseteq FV$ are free modules of equal rank ($=rk_R(M)$) over the 
PID $V$ and so 
the
extension of $I(M)$ to $V$ may also be described as the ideal generated by the
determinant of the matrix representing 
a basis of $MV$ written in terms of a basis of $FV$.
Now an appeal to the structure of modules over a principal ideal domain shows that the colength
of the ideal generated by the determinant is the length of $FV/MV$, as
desired.
\end{proof}

Next we prove the inductive step which holds more generally for contracted
modules. One of the observations that we will use in its proof is that
if $S = R[\frac{\m}{x}]$ for $x \in \m \backslash \m^2$, then, $$MS = \bigcup_{n \geq 0} \frac{\m^n}{x^n} M,$$
regarded as subsets of $M_K$.

\begin{proposition}\label{isom} If $M$ is contracted from $S = R[\frac{\m}{x}]$ for $x \in \m \backslash \m^2$, then the natural map
\begin{equation}\label{natural} \frac{MV \cap F}{M} \rightarrow \frac{(MV \cap F)S}{MS}
\end{equation}
is an isomorphism (of $R$-modules).
\end{proposition}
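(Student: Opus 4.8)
The plan is to analyze the natural map in~\eqref{natural} by showing it is both injective and surjective, using the hypothesis that $M$ is contracted from $S = R[\frac{\m}{x}]$, i.e., $M = MS \cap F$ inside $FS$. The source of the map is $(MV \cap F)/M$ and the target is $(MV \cap F)S/MS$; the map is induced by the inclusion of modules into their extensions to $S$. First I would set $M' = MV \cap F$, so we are looking at $M'/M \to M'S/MS$, and record that since $S \subseteq V$ we have $MS \cap F \subseteq MV \cap F = M'$, but also $M = MS \cap F$, so $M \subseteq MS$; thus the map is well-defined. I would also want to note that $M'$ is itself contracted from $V$ (indeed $M'V = MV$ since $MS \subseteq MV$ forces $M'V \subseteq MV \subseteq M'V$, wait—more simply $M' V = (MV\cap F)V \subseteq MV$ and $M \subseteq M'$ gives $MV \subseteq M'V$, so $M'V = MV$ and $M'V \cap F = MV \cap F = M'$), so Corollary~\ref{onebase} and the machinery of \S3.1 apply to $M'$.

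For injectivity: suppose $w \in M'$ maps to $0$, i.e., $w \in MS$. Then $w \in MS \cap F$. Since $M$ is contracted from $S$, $MS \cap F = M$, so $w \in M$. Hence the map is injective — this is essentially immediate from the contractedness hypothesis and the identification $MS \cap F \subseteq FS$ with $F \subseteq FS$ via the canonical inclusion. For surjectivity: take an element of $M'S$; using the description $M'S = \bigcup_{n \geq 0} \frac{\m^n}{x^n} M'$ noted just before the proposition, such an element has the form $\frac{z}{x^n} m'$ for some $z \in \m^n$, $m' \in M'$. I would need to show its class mod $MS$ equals the class of some element of $M'$ mod $MS$ — equivalently that $M'S = M' + MS$. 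Since $M' \subseteq M'S$ and $MS \subseteq M'S$ trivially, the content is $M'S \subseteq M' + MS$. The natural approach is to induct on $n$: show $\frac{\m}{x} M' \subseteq M' + MS$, since iterating this gives $\frac{\m^n}{x^n}M' \subseteq M' + \frac{\m^{n-1}}{x^{n-1}}MS = M' + MS$ (as $MS$ is an $S$-module, $\frac{\m}{x}MS \subseteq MS$).

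So the crux reduces to: $\frac{\m}{x} M' \subseteq M' + MS$. Here I expect to use that $M$ is contracted from $S$ in the form $(M :_F x) = (M :_F \m)$ (Proposition~\ref{contracted}(2)), together with the analogous statement for $M'$ (which holds since $M'$ is contracted from $V$, hence from $S$). Given $m' \in M'$ and $y \in \m$, I want $\frac{y}{x} m' \in M' + MS$. Writing $\frac{y}{x}m' = m'' + (\text{element of } MS)$ amounts to finding $m'' \in M'$ with $x m'' \equiv y m' \pmod{xMS}$, i.e., $ym' - xm'' \in xMS \cap F$. Now $xMS \cap F = x(MS \cap \frac{1}{x}F)$; since $\frac1x F \supseteq F$ and $MS \cap F = M$, one analyzes $MS \cap \frac1x F$. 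The key computation I anticipate is that $MS \cap \frac{1}{x}F = \frac{1}{x}(M :_F x)\cdot(\text{something})$ — more carefully, an element $u \in MS$ with $xu \in F$ satisfies $xu \in MS \cap F = M$, so $u \in \frac1x M \cap MS$, and I'd want $\frac1x M \cap MS = \frac1x(M:_F x) \cdot x$... Let me instead phrase it cleanly: the real engine should be that $MS/M \cong$ (via Proposition~\ref{contracted}-type arguments) has the property that multiplication by $\frac{\m}{x}$ on $F/M$ stabilizes, and that $M' \supseteq (M :_F \m) \supseteq$ the relevant lifts.

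The step I expect to be the main obstacle is precisely this inclusion $\frac{\m}{x}M' \subseteq M' + MS$, i.e., identifying the graded pieces $\frac{\m^n}{x^n}M'$ modulo $MS$ with pieces already in $M'$. The cleanest route is probably: show that the filtration $M \subseteq M' + xMS/? \subseteq \cdots$ stabilizes at $M'$ after one step, by combining (a) $M' = MV \cap F$ is contracted from $V$ hence $(M' :_F x) = (M' :_F \m)$, and (b) the dimension/length count $\lambda_R(F/(xF + M')) = \nu_R(M') - \rk_R(M')$ from Proposition~\ref{contracted}(3). One then shows the natural surjection $F/M' \twoheadrightarrow FS/M'S$ combined with the analogous surjection $F/M \twoheadrightarrow FS/MS$ forces $M'S \cap (F + xMS) = M' + xMS$ by a length comparison. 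I would carry out the argument in the order: (i) reduce to $M'S = M' + MS$; (ii) reduce to one-step $\frac{\m}{x}M' \subseteq M' + MS$ by induction on the filtration degree; (iii) prove that one-step inclusion via the contractedness of both $M$ and $M'$ from $S$ together with a length count; (iv) conclude injectivity directly from $MS \cap F = M$. Step (iii) is where essentially all the work lies.
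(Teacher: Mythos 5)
Your framework matches the paper's: injectivity is immediate from $MS \cap F = M$ (exactly as in the paper), and surjectivity is correctly reduced to $(MV\cap F)S = (MV\cap F) + MS$ and then to a one-step inclusion of the form $\frac{\m}{x}(MV\cap F) \subseteq (MV\cap F) + MS$, which iterates because $MS$ is an $S$-module. But the crux --- your step (iii) --- is not actually proved; it is left as ``a length comparison'' whose terms (e.g.\ the proposed identity $M'S \cap (F + xMS) = M' + xMS$, or the computation of $MS \cap \frac{1}{x}F$) are never pinned down, and the intermediate attempt at it visibly stalls. That is a genuine gap, because the step does not follow formally from contractedness of $M$ and $M'=MV\cap F$ alone; it needs one further numerical input that your sketch never identifies.

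The missing idea is the equality of orders $ord_R(M) = ord_R(MV\cap F)$. You already observe that $(MV\cap F)V = MV$, and Lemma \ref{ordlemma} turns this into the statement that both modules have the same order, namely $\lambda_V(FV/MV)$. Since both $M$ and $MV\cap F$ are contracted from $S$, Proposition \ref{contracted} converts the two orders into the colengths $\lambda_R\bigl(F/(M+xF)\bigr)$ and $\lambda_R\bigl(F/((MV\cap F)+xF)\bigr)$; as these are equal and $M + xF \subseteq (MV\cap F) + xF$, one gets $(MV\cap F) + xF = M + xF$, hence $MV\cap F \subseteq M + xF$ and so $MV\cap F = M + x\bigl((MV\cap F):_F x\bigr) = M + x\bigl((MV\cap F):_F \m\bigr)$, again by Proposition \ref{contracted} applied to $MV\cap F$. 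Multiplying this identity by powers of $\m$ gives $\m^n(MV\cap F) = \m^n M + x^n(MV\cap F)$ for all $n$, which is precisely your one-step inclusion in iterated form, and dividing by $x^n$ and taking the union over $n$ yields $(MV\cap F)S = MS + (MV\cap F)$, i.e.\ surjectivity. So your reduction is sound and your injectivity argument is complete, but without the order comparison the central containment $MV\cap F \subseteq M + xF$ --- and with it the whole surjectivity argument --- is unestablished.
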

\begin{proof} 
Begin by observing that $M \subseteq MV \cap F \subseteq F$ and so
the characterisation of the double dual from \S \ref{Kdysum} imples that $F$ is also the
double dual of $MV \cap F$.
Next, observe by an  application 	of
Lemma \ref{ordlemma} that $ord_R(M) = ord_R(MV \cap F)$ since both of these
extend to the same submodule, namely $MV$, of $FV$.

Now, since $MV \cap F$ is contracted from $V$, it is certainly also
contracted from its subring $S$ (it is easily seen that $V$ is the localisation
of $S$ at its height 1 prime $\m S$). We now apply Proposition \ref{contracted}
(to both $M$ and $MV \cap F$) and the above equality of orders to conclude that
$$
\lambda_R(F/MV \cap F +xF) = ord_R(MV \cap F) = ord_R(M) = \lambda_R(F/M+xF),
$$
or equivalently that $MV \cap F + xF = M + xF$.
In particular, $MV \cap F \subseteq M+xF$, and therefore
\begin{equation}\label{mvvsm}
MV\cap F = M + x(MV\cap F:_F x) = M + x(MV\cap F :_F \m).
\end{equation}
where the last equality follows from Proposition \ref{contracted} applied
to $MV \cap F$.

We will now prove by induction that 
\begin{equation}\label{fund}
\m^n(MV \cap F) = \m^nM + x^n(MV \cap F),
\end{equation} 
for all $n \geq 0$.
The case $n=0$ is trivial and the basis case $n=1$ follows easily from Equation (\ref{mvvsm}).
As for the inductive step, we have that for $n \geq 1$,
\begin{eqnarray*}
\m^{n+1}(MV \cap F) &=&  \m^{n+1}M + x^n\m(MV \cap F)\\
                    &=&  \m^{n+1}M + x^n(\m M + x(MV \cap F))\\
                    &=&  \m^{n+1}M + x^{n+1}(MV \cap F),
\end{eqnarray*}
where the first equality follows from the inductive assumption and the second from
the basis case.

Equation (\ref{fund})  implies that
$$\bigcup_{n \geq 0}\frac{\m^n}{x^n}(MV \cap F) = \bigcup_{n \geq 0}\frac{\m^n}{x^n}M
+ (MV \cap F),$$
and so in view of the observation preceding the statement of this proposition,
\begin{equation*}\label{surjects}
(MV \cap F)S = MS + (MV \cap F).
\end{equation*}
Hence the natural map of Equation (\ref{natural}) is surjective and its injectivity 
follows easily since $M$ is contracted.
\end{proof}

The next proposition restates Proposition \ref{isom} in terms
of quadratic transforms.

\begin{proposition}\label{contquad} If $M$ is contracted
from $S = R[\frac{\m}{x}]$ for $x \in \m \backslash \m^2,$ then,
$$
\frac{MV \cap F}{M} \cong \bigoplus_T \frac{(MT)^{**}}{MT}$$
where the direct sum extends over all first quadratic transforms $T$ of 
$R$ (and the corresponding summand vanishes except for finitely many $T$).
\end{proposition}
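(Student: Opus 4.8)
The plan is to deduce Proposition~\ref{contquad} from Proposition~\ref{isom} by analyzing the right-hand side $(MV \cap F)S/MS$ as an $S$-module and localizing at the maximal ideals of $S$ containing $\m S$. First I would set $N = MV \cap F$, so that Proposition~\ref{isom} gives an $R$-module isomorphism $N/M \cong NS/MS$. The $S$-module $NS/MS$ has finite length (it is a homomorphic image of the finite-length $R$-module $N/M$, hence finitely generated over $R$ and annihilated by a power of $\m$, so annihilated by a power of $\m S$), and therefore it decomposes as the direct sum of its localizations at the finitely many maximal ideals of $S$ lying over $\m$ — equivalently over the height-one prime $\m S$, whose V/S-localization recovers $V$. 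Each such maximal ideal gives a first quadratic transform $T = S_{\mathfrak p}$ of $R$, and by the remarks recalled before Proposition~\ref{contracted} every first quadratic transform of $R$ arises this way up to the one exceptional localization of $R[\frac{\m}{y}]$, which I will need to handle by remarking that the decomposition is intrinsic (independent of the choice of $x$) once we know $M$ is contracted from $V$ — indeed Theorem~\ref{Kdymain} says $M$ integrally closed is contracted from most $S$, but here we only assume contracted from one $S$; so instead I would note that $MT$ for the exceptional $T$ is free by Corollary~\ref{onebase} applied after observing $N = MV\cap F$ is contracted from $V$, making the corresponding summand $(MT)^{**}/MT$ vanish, consistent with its absence from the $S$-decomposition.

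Next I would identify each localized summand. Localizing the inclusion $MS \subseteq NS$ at a maximal ideal $\mathfrak p$ of $S$ over $\m S$ gives $(MS)_{\mathfrak p} \subseteq (NS)_{\mathfrak p}$ inside the common ambient $K$-vector space; since localization commutes with the operations $MS$ and with intersection against the free module $F$ after extension, and since $T = S_{\mathfrak p}$, we get $(MS)_{\mathfrak p} = MT$ and $(NS)_{\mathfrak p} = NT = (MV \cap F)T$. The key point is then that $(MV \cap F)T = (MT)^{**}$: on one hand $MT \subseteq (MV\cap F)T \subseteq FT$ and $FT$ is free of the right rank with finite-length quotient by $MT$ (the quotient $FT/MT$ is a localization of the finite-length module $FS/MS$, hence finite length), so $(MT)^{**}$ is the contraction of $MT$ in $FT$; on the other hand $(MV\cap F)T/MT$ is a localization of $NS/MS \cong N/M$ which is finite length, so $(MV\cap F)T$ sits between $MT$ and $(MT)^{**} = FT \cap MT\cdot$(appropriate saturation). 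To pin down that $(MV\cap F)T$ is exactly the double dual rather than something strictly smaller, I would use that $MV\cap F$ is contracted from $V$ and $V$ is also the order valuation ring at the relevant height-one prime, so $(MV\cap F)T = (MV\cap F)VT \cap FT = MVT\cap FT$; combining with $(MT)^{**} = \bigcap (MT)W$ over DVRs $W \supseteq T$ and the observation that $MT$ being contracted from $VT$ at $T$ forces $(MT)^{**} = MVT \cap FT$, gives the claimed equality — this is precisely the content of Theorem~\ref{charcont}(4)$\Leftrightarrow$(1) transplanted to $T$, once we know $I(MT)$ is a power of $\m_T$, which holds because $I(MT) = I(M)T$ and $I(M)$ is a power of $\m$ by Proposition~\ref{genlcase} applied to $N = MV\cap F$.

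Assembling: $N/M \cong NS/MS = \bigoplus_{\mathfrak p} (NS/MS)_{\mathfrak p} = \bigoplus_{\mathfrak p} NT/MT = \bigoplus_T (MT)^{**}/MT$, where $\mathfrak p$ ranges over maximal ideals of $S$ over $\m S$ and $T = S_{\mathfrak p}$, and the sum is extended over all first quadratic transforms $T$ of $R$ by appending the zero summands for those $T$ not arising from this particular $S$ (justified by Corollary~\ref{onebase} as above). I expect the main obstacle to be the bookkeeping around the single exceptional first quadratic transform that is not a localization of $S = R[\frac{\m}{x}]$: I must argue cleanly that its double-dual quotient vanishes, which reduces to showing $MT$ free there, and this follows from $I(MT) = \m^n T$ being principal via Lemma~\ref{prinfree} — but I need to first know $M = MV\cap F$, or at least that $N = MV\cap F$ is contracted from $V$, to invoke Proposition~\ref{genlcase}; fortunately the latter is immediate since $NV\cap F = (MV\cap F)V \cap F = MV \cap F = N$. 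The secondary subtlety is confirming that the finite-length $S$-module $NS/MS$ is supported only at maximal ideals lying over $\m$ (so that no "horizontal" primes of $S$ contribute), which follows because it is annihilated by a power of $\m S$.
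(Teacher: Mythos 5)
Your overall architecture is the same as the paper's: reduce via Proposition~\ref{isom} to the $S$-module $(MV\cap F)S/MS$, decompose that finite-length module over the maximal ideals of $S$ containing $\m S$, and identify each localization with $(MT)^{**}/MT$. The genuine gap is exactly at the point you flag as the main obstacle, the exceptional first quadratic transform $T = R[\frac{\m}{y}]_{(y,\frac{x}{y})}$ that is not a localization of $S$. There you need $MT$ free (so that the appended summand $(MT)^{**}/MT$ vanishes), and your justification is to apply Corollary~\ref{onebase} ``after observing $N = MV\cap F$ is contracted from $V$'' and to assert that $I(M)$ is a power of $\m$ ``by Proposition~\ref{genlcase} applied to $N = MV\cap F$.'' Both are non sequiturs: those results applied to $N$ give that $NT$ is free and that $I(N)$ is a power of $\m$ --- conclusions about $N$, not about $M$, and freeness of $NT$ says nothing about its submodule $MT$. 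Under the actual hypothesis ($M$ contracted from a single $S$), $I(M)$ need not be a power of $\m$: for instance $M = I = (x^2,xy,y^3)$ is contracted from $R[\frac{\m}{y}]$ while $I(M)=I$ is not a power of $\m$. The paper closes this step with a separate argument that genuinely uses the hypothesis on $M$ itself: since $M$ is contracted from $S$, so is $I(M)$ (Proposition~\ref{contracted}), hence $\m^n = I(M) + x\m^{n-1}$ with $n = ord_R(M)$, so $y^n \in I(M)+x\m^{n-1}$; reading this in $T$ gives $I(M)^T = T$, so $I_{FT}(MT) = I(M)T$ is principal (generated by $y^n$) and $MT$ is free by Lemma~\ref{prinfree}. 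Some argument of this kind is unavoidable, and your proposal does not contain one.

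A secondary flaw is your identification $(MV\cap F)T = (MT)^{**}$ at the non-exceptional $T$. Your first route rests on ``$FT/MT$ is a localization of the finite-length module $FS/MS$,'' but $FS/MS$ is not finite length in general (for $M=\m\subseteq F=R$ one has $FS/MS \cong k[t]$, and $FT/MT$ is then a DVR, not finite length); your second route again leans on ``$I(M)$ is a power of $\m$'' and conflates $V = S_{\m S}$ with the order valuation ring $V_T$ of $T$. This part, unlike the exceptional-transform step, is easily repaired along the paper's lines: Corollary~\ref{onebase} legitimately applies to $N = MV\cap F$ (which is contracted from $V$) to give $NT$ free, and since $NT \supseteq MT$ with finite-length quotient (a localization of $NS/MS \cong N/M$), the uniqueness characterization of the double dual yields $NT = (MT)^{**}$. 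With that repair and a correct treatment of the exceptional transform, your outline would become the paper's proof.
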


\begin{proof} 
In view of Proposition \ref{isom} it is to be seen that
$$\frac{(MV \cap F)S}{MS} \cong \bigoplus_T \frac{(MT)^{**}}{MT}$$
where the direct sum extends over all first quadratic transforms $T$ of
$R$.

Observe that the module $N = \frac{(MV \cap F)S}{MS}$ is an $S$-module that is
of finite length as an $R$-module and hence also an $S$-module. Thus its
support is a set of finitely many maximal ideals, say $Q_1,\cdots,Q_k$, of $S$.
Each of these maximal ideals necessarily contains $\m S$, since any
other maximal ideal of $S$ contracts to a height 1 prime of $R$,
at which the localisation of $N$ vanishes. Hence $N$ is isomorphic
to the direct sum of its localisations at these maximal ideals.

Let $Q$ be one such maximal ideal of $S$ in the support of $N$. By definition,
$T = S_Q$ is a first quadratic transform of $R$. Then, 
$$
N_Q \cong \frac{(MV \cap F)T}{MT},
$$
which is a $T$-module of finite length (being a summand of $N$). Since
$(MV \cap F)T$ is a free $T$-module by Corollary \ref{onebase}, the characterisation
of the double dual shows that $(MV \cap F)T = (MT)^{**}$.

We see therefore that $N$ is isomorphic to a direct sum of all $(MT)^{**}/MT$ where the direct sum ranges over
all (first) quadratic
transforms $T$ of $R$ that are localisations of $S$.
It only remains to see that if $T$ is the first quadratic transform of $R$ that
is not a localisation of $S$, then $MT$ is a free $T$-module. Thus $T = R[\frac{\m}{y}]_{(y,\frac{x}{y})}$ where
$\m = (x,y)R$. Since $M$ is contracted from $S$, it follows from Proposition
\ref{contracted} that so is $I(M)$ and hence from Equation (\ref{mvvsm}) (applied
to $I(M)$) that 
$$
\m^n = I(M) + x(I(M):\m) = I(M) + x\m^{n-1} \Rightarrow y^n \in I(M) + x\m^{n-1},
$$
where $n = ord_R(I(M))$ ($= ord_R(M)$). Divide by $y^n$ and read the equation
in $T$ to conclude that $1 \in I(M)^T + \m_T$ (where $\m_T$ is the maximal ideal
of $T$) and therefore that $I(M)^T = T$. Thus $I_{FT}(MT) = I(M)T$ is principal (generated by
$y^n$) and so $MT$ is free by Lemma \ref{prinfree}.
\end{proof}

We now prove the following analogue of the Hoskin-Deligne length formula for integrally closed modules.

\begin{theorem}\label{hdm} Let $R$ be a two-dimensional regular local ring with maximal ideal $\m$
and infinite residue field.
Let $M$ be a finitely generated, torsion-free, integrally closed $R$-module.
Then,
\begin{equation}\label{hdf}
\lambda_R(\frac{F}{M}) = \sum_T \lambda_T(\frac{(MT)^{**}}{MV_T \cap 
(MT)^{**}})[T:R]
\end{equation}
where the sum extends over all quadratic transforms $T$ of $R$. 
\end{theorem}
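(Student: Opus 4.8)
The plan is to prove $(\ref{hdf})$ by induction on the colength $\lambda_R(F/M)$. The base case is $\lambda_R(F/M) = 0$, i.e.\ $M = F$ is free: then $MV \cap F = F$, so the $T = R$ term $\lambda_R(F/(MV \cap F))$ is $0$, and for every quadratic transform $T \neq R$ the module $MT = FT$ is free, whence $(MT)^{**} = MT = MV_T \cap (MT)^{**}$ and every other term vanishes too. So both sides of $(\ref{hdf})$ are $0$. From now on assume $\lambda_R(F/M) > 0$; since $F/M$ vanishes precisely when $M$ is free, this is the same as assuming $M$ is not free.

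Because $M$ is integrally closed, Theorem \ref{Kdymain} provides $x \in \m \backslash \m^2$ such that $M$ is contracted from $S = R[\frac{\m}{x}]$. I would then split
$$\lambda_R(F/M) = \lambda_R(F/(MV \cap F)) + \lambda_R((MV \cap F)/M),$$
the first summand being exactly the $T = R$ term of $(\ref{hdf})$ (note $(MR)^{**} = F$, $V_R = V$, $[R:R] = 1$). For the second summand I would invoke Proposition \ref{contquad}, which gives an $R$-module isomorphism $(MV \cap F)/M \cong \bigoplus_T (MT)^{**}/MT$ over the first quadratic transforms $T$ of $R$, with all but finitely many summands zero. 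Each $(MT)^{**}/MT$ has finite length over the local ring $T$, and passing from a finite $T$-length to an $R$-length multiplies by the residue degree $[T:R]$; hence
$$\lambda_R((MV \cap F)/M) = \sum_T \lambda_T((MT)^{**}/MT)\,[T:R],$$
again over the first quadratic transforms $T$ of $R$.

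Next I would apply the inductive hypothesis to each $MT$. By Theorem \ref{Kdymain}, $MT$ is a finitely generated, torsion-free, integrally closed module over the two-dimensional regular local ring $T$, which has infinite residue field, and its double dual is $(MT)^{**}$. The step that legitimizes the induction is the strict inequality $\lambda_T((MT)^{**}/MT) < \lambda_R(F/M)$ for each first quadratic transform $T$ with a nonzero summand: from the displayed splitting this holds as soon as $\lambda_R(F/(MV \cap F)) > 0$, and the latter is automatic when $M$ is not free --- for $MV \cap F = F$ would force $FV = MV$, hence $ord_R(M) = \lambda_V(FV/MV) = 0$ by Lemma \ref{ordlemma}, making $M$ free. (Since $[T:R] \geq 1$, strictness of the $R$-length inequality in the sum carries over to the individual $T$-lengths.) Granting this, induction gives, for each first quadratic transform $T$,
$$\lambda_T((MT)^{**}/MT) = \sum_{T' \geq T} \lambda_{T'}\bigl((MT')^{**}/(MV_{T'} \cap (MT')^{**})\bigr)\,[T':T],$$
the sum over quadratic transforms $T'$ of $T$, where I use the transform-of-transform identities $(MT)T' = MT'$, $((MT)T')^{**} = (MT')^{**}$, and $(MT)V_{T'} = MV_{T'}$.

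To finish, I would substitute the last display into the formula for $\lambda_R((MV \cap F)/M)$, use $[T':T]\,[T:R] = [T':R]$, and invoke the uniqueness of the chain of quadratic transforms recorded in the preliminaries (from \cite{ZrsSml1960}): every quadratic transform $T' \neq R$ of $R$ dominates a unique first quadratic transform of $R$. The double sum therefore reindexes as $\sum_{T' \neq R} \lambda_{T'}\bigl((MT')^{**}/(MV_{T'} \cap (MT')^{**})\bigr)[T':R]$, and adding back the $T = R$ term $\lambda_R(F/(MV \cap F))$ yields exactly $(\ref{hdf})$. The main obstacle I anticipate is this bookkeeping that secures the induction --- pinning down the strict drop in colength and the transform-of-transform compatibilities --- since everything else is a direct assembly of Proposition \ref{contquad}, Lemma \ref{ordlemma}, and the residue-degree length conversion.
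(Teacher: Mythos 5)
Your proposal is correct and follows essentially the same route as the paper: induction on $\lambda_R(F/M)$, the splitting $\lambda_R(F/M) = \lambda_R(F/(MV \cap F)) + \lambda_R((MV \cap F)/M)$, Proposition \ref{contquad} together with the residue-degree length conversion, the strict drop in colength coming from $MV \cap F \neq F$ via Lemma \ref{ordlemma}, and the reindexing by uniqueness of the chain of quadratic transforms and multiplicativity of $[T':R]$. No gaps; only a cosmetic difference in whether contractedness of $M$ is cited from Theorem \ref{Kdymain} or Proposition \ref{contracted}.
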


\begin{proof} The proof proceeds by induction on 
the length of $F/M$.
If $\lambda_R(F/M) = 0$, then $M = F$ is a free $R$-module. 
Hence, for each quadratic transform $T$ of $R$, the module $MT$ is a free $T$-module and therefore all terms on the right 
in equation (\ref{hdf}) also vanish.

Next, we suppose that $\lambda_R(F/M) \geq 1$. 
We may write
\begin{equation}\label{first}
\lambda_R(\frac{F}{M}) = \lambda_R(\frac{F}{MV \cap F}) + 
\lambda_R(\frac{MV \cap F}{M}).
\end{equation}
Note that $MV \cap F \neq F$ since otherwise $MV = FV$ and so by
Lemma \ref{ordlemma}, $ord_R(M) = 0$. This implies that $I(M) =R$
and so $M$ would be free (and therefore equal to $F$). 

Since $M$ is integrally closed,
it is contracted from
$S = R[\frac{\m}{x}]$ for some $x \in \m \backslash \m^2$ by Proposition \ref{contracted}. Now applying Proposition \ref{contquad} to the last term
in Equation (\ref{first}) gives:
\begin{equation}\label{second}
\lambda_R(\frac{F}{M}) = \lambda_R(\frac{F}{MV \cap F}) +
\sum_T \lambda_T(\frac{(MT)^{**}}{MT})[T:R]
\end{equation}
where the sum extends over all first quadratic transforms $T$ of $R$.
The multiplicative factor $[T:R]$ arises since any finite length $T$-module
is also a finite length $R$-module and its lengths as $R$- and $T$-modules
differ by exactly this factor.

Since $MV \cap F \neq F$, Equation (\ref{second}) shows that $\lambda_T(\frac{(MT)^{**}}{MT}) <
\lambda_R(\frac{F}{M})$ for any $T$ occurring in the sum.
Also, by Theorem \ref{Kdymain}, $MT$ is an integrally closed module and 
so by induction we may assume that 
 for any such $T$ we have
\begin{equation}\label{third}
\lambda_T(\frac{(MT)^{**}}{MT}) = \sum \lambda_{T^\prime}(
\frac{(MT^\prime)^{**}}{MV_{T^\prime} \cap (MT^\prime)^{**}})
[T^\prime : T].
\end{equation}
where the sum extends over all quadratic transforms $T^\prime$ of $T$.

Finally, substituting the expression from Equation (\ref{third}) into
Equation (\ref{second}) and using the facts - see \S \ref{Kdysum} - that (i) any quadratic
transform of $R$ is either $R$ itself or a quadratic transform of a
unique first quadratic transform of $R$ and (ii)
multiplicativity
of the residue field extension degree, we have the desired result.
\end{proof}

\begin{remark} We note the sense in which Theorem \ref{hdm} is an analogue
of the Hoskin-Deligne formula for ideals. The terms on the right in Equation
(\ref{hdf}) depend only on the modules $MV_T \cap (MT)^{**}$. These are all
modules that are contracted from the order valuation rings of various quadratic
transforms of $R$. Thus the colength (in its double dual) of an 
integrally closed module is expressed in terms of the colengths of modules
contracted from the order valuation rings of quadratic
transforms of $R$. When $M=I$ - an $\m$-primary ideal of $R$, $(MT)^{**}$
is just $\m_R^{ord_T(IT)}T$ and so we recover the ideal form of the result.
\end{remark}

\begin{remark} Observe that our proof of Theorem \ref{hdm} shows that it
is more than just a numerical result. What is actually seen is that the finite length $R$-module $F/M$
has a filtration where the successive quotients are exactly those of the form $(MT)^{**}/(MV_T \cap (MT)^{**})$
where $T$ is a quadratic transform of $R$.
\end{remark}

An immediate corollary of the analogue of the Hoskin-Deligne formula is
an expression for the Buchsbaum-Rim multiplicity in terms of those
of modules contracted from the order valuation rings of various quadratic
transforms of $R$.

\begin{corollary}\label{brcor} If $M$ is an integrally closed
$R$-module, then,
$$e_R(M) = \sum e_T(MV_T \cap (MT)^{**})[T : R]$$
where the sum extends over all quadratic transforms $T$ of $R$.
\end{corollary}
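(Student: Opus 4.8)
The plan is to apply the length formula of Theorem~\ref{hdm} not to $M$ itself but to every symmetric power $S_n(M)$, and to extract the Buchsbaum--Rim multiplicities as normalized leading coefficients of the resulting length polynomials. Put $r = \rk_R(M)$ (the case $M = F$ free is immediate, both sides being $0$, so assume $M$ is non-free). Recall that if $G \to F \to F/M \to 0$ is a free presentation with $G$ surjecting onto $M$, then the image of $\Sym_n^R(G)$ in $\Sym_n^R(F)$ is exactly $S_n(M) \subseteq S_n(F) = \Sym_n^R(F)$, and $S_n(F)$ --- being free --- is, by the characterisation of the double dual, the double dual of $S_n(M)$. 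Hence, by definition, $e_R(M)$ is the normalized leading coefficient of
\[
q_R(n) \;:=\; \lambda_R\big(S_n(F)/S_n(M)\big),
\]
a polynomial of degree $r+1$ for $n \gg 0$; the same description of Buchsbaum--Rim multiplicities applies verbatim over every quadratic transform $T$ of $R$.

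Since $M$ is integrally closed, Theorem~\ref{Kdymain} makes every $S_n(M)$ integrally closed, with double dual $S_n(F)$. Applying Theorem~\ref{hdm} to $S_n(M)$ therefore gives the finite sum
\[
q_R(n) \;=\; \sum_T \lambda_T\!\left(\frac{(S_n(M)T)^{**}}{S_n(M)V_T \cap (S_n(M)T)^{**}}\right)[T:R],
\]
over all quadratic transforms $T$ of $R$.

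Next I would identify each summand with $[T:R]$ times a Buchsbaum--Rim polynomial over $T$. First, $S_n(M)T = S_n(MT)$, since both are the $T$-submodule of $\Sym_n^K(M_K)$ generated by $S_n(M)$; and $S_n$ of a double dual is the double dual of $S_n$, so $(S_n(M)T)^{**} = S_n\big((MT)^{**}\big)$. Second, Corollary~\ref{stable}, applied with $R$ replaced by $T$ and $M$ by $MT$, gives
\[
S_n(M)V_T \cap (S_n(M)T)^{**} \;=\; S_n(MT)\,V_T \cap S_n\big((MT)^{**}\big) \;=\; S_n\big(MV_T \cap (MT)^{**}\big).
\]
Thus the $T$-summand equals $[T:R]\,q_T(n)$, where $q_T(n) := \lambda_T\big(S_n((MT)^{**})/S_n(MV_T \cap (MT)^{**})\big)$ is precisely the Buchsbaum--Rim length polynomial of the $T$-module $(MT)^{**}/(MV_T \cap (MT)^{**})$ --- note that $(MT)^{**}$ is the double dual of $MV_T \cap (MT)^{**}$. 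Since $\rk_T (MT)^{**} = r$ and $\dim T = 2$, this $q_T$ again has degree $r+1$ for $n \gg 0$, with normalized leading coefficient $e_T\big(MV_T \cap (MT)^{**}\big)$.

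We have thus shown $q_R(n) = \sum_T [T:R]\,q_T(n)$ for $n \gg 0$, a finite sum of polynomials all of degree $r+1$. Equating the coefficients of $n^{r+1}$ --- equivalently, the normalized leading coefficients --- gives $e_R(M) = \sum_T e_T\big(MV_T \cap (MT)^{**}\big)[T:R]$, as required. The only step I expect to need genuine care, rather than routine bookkeeping, is the chain of identifications in the third paragraph, showing that forming $S_n$, extending scalars to $T$, passing to the double dual, and intersecting with the order valuation ring all commute in the required way; but each ingredient is furnished by Corollary~\ref{stable} (used over $T$) together with the functoriality of $S_n(-)$ on birational overrings. A minor additional point is the observation that every $q_T$ has the same degree $r+1$ as $q_R$ --- so that no term dominates and the leading coefficients genuinely add --- which holds because $\rk$ is preserved under transforms and each quadratic transform of $R$ is again a two-dimensional regular local ring.
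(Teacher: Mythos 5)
Your proposal is correct and follows essentially the same route as the paper: apply Theorem \ref{hdm} to each $S_n(M)$ (integrally closed by Theorem \ref{Kdymain}), rewrite each summand via $S_n(M)T = S_n(MT)$ and Corollary \ref{stable} over $T$ as the Buchsbaum--Rim length polynomial of $(MT)^{**}/(MV_T\cap (MT)^{**})$, and compare normalized leading coefficients. The only cosmetic difference is that the paper notes the contributing $T$ are exactly those with $MV_T \cap (MT)^{**} \neq (MT)^{**}$ (for the remaining $T$ the summand is identically zero, matching $e_T = 0$), which your degree-$(r+1)$ bookkeeping handles implicitly.
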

\begin{proof} By Theorem \ref{Kdymain}, for each $n \geq 1$, $S_n(M)$ is
integrally closed and by applying Theorem \ref{hdm} to $S_n(M)$ we get:
\begin{eqnarray*}
\lambda_R\left(\frac{S_n(F)}{S_n(M)}\right) &=& \sum \lambda_T\left(\frac{(S_n(M)T)^{**}  
}{S_n(M)V_T \cap 
(S_n(M)T)^{**}}\right)[T:R]\\
&=& \sum \lambda_T\left(\frac{(S_n(MT))^{**}  
}{S_n(MV_T \cap (MT)^{**})}\right)[T:R],
\end{eqnarray*}
where the sum extends over all quadratic transforms $T$ of $R$ and the second equality follows from Corollary \ref {stable} applied to $MT$.

Only finitely many $T$ contribute to the
sum on the right and these are those for
which $MV_T \cap (MT)^{**} \neq (MT)^{**}$.
For any such $T$, the term corresponding to
$T$ is given by a polynomial in $n$ of degree $1+rk_T(MT) = 1+rk_R(M)$ and
leading coefficient $e_T(MV_T \cap (MT)^{**})$, if $n$ is sufficiently large.
The desired equality follows.
\end{proof}

\section{A multiplicity formula for integrally closed modules}

The main result in this section - Corollary \ref{multformula} - establishes a relation
between the colengths and multiplicities of an integrally closed module and
its ideal of minors. While the result itself refers only to a single two-dimensional regular local ring, the proof seems to need the machinery of
quadratic transforms, exactly as in the proof of, say, Zariski's product theorem. Also, note that this result is genuinely a module statement - the
ideal case of this being a triviality.

\begin{proposition}\label{lmmodn} If $M$ is
integrally closed with $I(M) = \m^n$ and
 $N$ is a minimal reduction of $M$,  then 
$\lambda_R(\frac{M}{N}) = \binom n2.$
\end{proposition}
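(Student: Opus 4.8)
The plan is to reduce the statement to a colength computation about modules contracted from the order valuation ring $V$, and then exploit the structure of such modules very explicitly. First I would observe that by Theorem~\ref{charcont}, the hypothesis that $M$ is integrally closed with $I(M) = \m^n$ is exactly the statement that $M = MV \cap F$; so $M$ is contracted from $V$. By Proposition~\ref{amalgam} we know $e_R(M) = e_R(N) = \lambda_R(F/N)$, and since $N \subseteq M \subseteq F$ we have $\lambda_R(M/N) = \lambda_R(F/N) - \lambda_R(F/M) = e_R(M) - \lambda_R(F/M)$. So it suffices to compute both $e_R(M)$ and $\lambda_R(F/M)$ and check their difference is $\binom{n}{2}$.

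For $\lambda_R(F/M)$: since $M = MV \cap F$, Corollary~\ref{onebase} says $MT$ is free for every quadratic transform $T \neq R$, so in the Hoskin–Deligne formula (Theorem~\ref{hdm}) only the $T = R$ term survives, giving $\lambda_R(F/M) = \lambda_R(F/(MV \cap F)) = 0$ — wait, that is not right, since the $T=R$ summand of Theorem~\ref{hdm} is $\lambda_R\big((MR)^{**}/(MV_R \cap (MR)^{**})\big) = \lambda_R(F/(MV\cap F)) = \lambda_R(F/M) $, which is a tautology. So instead I would compute $\lambda_R(F/M)$ directly: by Lemma~\ref{ordlemma}, $\lambda_V(FV/MV) = n$, and one should relate $\lambda_R(F/M)$ to this. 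Choosing $C \subseteq M$ as in Lemma~\ref{Clem} with $\ord_R(\det C) = n$ and $CV \cap F = M$, Lemma~\ref{compute} gives $M = (\m^n C :_F \det(C))$. From the free presentation and the structure theory over the PID $V$ one can, after a change of basis of $F$, take $C$ to be diagonal with entries $x^{a_1}, \dots, x^{a_r}$ in suitable local coordinates summing $\sum a_i = n$; then $M = (\m^n C :_F x^n)$ decomposes as a direct sum of ideals $\m^{n - a_i}$ (after twisting), and $\lambda_R(F/M) = \sum_i \binom{n-a_i+1}{2}$ minus correction terms — this is the Hoskin–Deligne formula for $M$ having all quadratic transforms free. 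Similarly, for $e_R(M) = \lambda_R(F/N)$, a minimal reduction $N$ is $(r+1)$-generated, and its image in $V$ also generates $MV$, so $\lambda_V(FV/NV) = n$ as well; the Buchsbaum–Rim multiplicity of an $(r+1)$-generated module whose minors ideal is $\m^n$ can be read off.

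The cleanest route, and the one I would actually pursue, is to pass to $\tilde R$ with algebraically closed residue field as in Proposition~\ref{genlcase} (lengths and $I(M)$ are preserved, minimal reductions extend to minimal reductions), and then use that over $\tilde R$ a module contracted from $V$ with $I(M) = \m^n$ is, up to a direct summand which is free, completely described: after diagonalizing $C$ one has $M \cong \bigoplus_{i=1}^r \m^{b_i}$ (as a submodule of $F \cong R^r$ via appropriate twists) with $\sum b_i = n$. For such a direct sum, both $\lambda_R(F/M) = \sum_i \binom{b_i+1}{2}$ and $e_R(M) = \sum_i e_R(\m^{b_i}) = \sum_i \binom{b_i+1}{2} + \binom{b_i}{2}$ are computed termwise (using $e_R(\m^{b}) = b^2$ via Proposition~\ref{amalgam} and the fact that $\m^b$ has a $2$-generated minimal reduction of colength $b^2$, while $\lambda_R(R/\m^b) = \binom{b+1}{2}$). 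Hence $\lambda_R(M/N) = e_R(M) - \lambda_R(F/M) = \sum_i \binom{b_i}{2}$, and one is reduced to the purely combinatorial identity $\sum_{i=1}^r \binom{b_i}{2} = \binom{n}{2}$ when $\sum b_i = n$ — which is false in general! So the diagonalization must in fact force $r - 1$ of the $b_i$ to be zero (equivalently, $F/M$ must be cyclic up to a free summand), which is precisely what $I(M) = \m^n$ being a power of $\m$ of the full order buys us: if two of the $b_i$ were positive, $I(M) = \prod \m^{b_i} = \m^n$ still, so I need a sharper input.

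The main obstacle, then, is exactly this: showing that the hypothesis $I(M) = \m^n$ together with integral closedness forces the module, after removing free summands, to be (isomorphic to) a single $\m^n$ rather than a more spread-out direct sum. The resolution I expect: an integrally closed module contracted from $V$ with $\ord_R(M) = n$ and $I(M) = \m^n$ must have $\nu_R(M) = \ord_R(M) + \rk_R(M) = n + r$ by Proposition~\ref{contracted}, but a minimal reduction $N$ has $\nu_R(N) = r + 1$; comparing $\lambda_V(FV/NV) = \lambda_V(FV/MV) = n$ with the Buchsbaum–Rim multiplicity $e_R(N) = \lambda_R(F/N)$ of the $(r+1)$-generated $N$ whose minors ideal is $\m^n$, I would argue $N$ is (up to a free complement of rank $r-1$) a $2$-generated minimal reduction of $\m^n$, so $\lambda_R(F/N) = e_R(\m^n) = n^2$. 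It then remains to show $\lambda_R(F/M) = n^2 - \binom{n}{2} = \binom{n+1}{2}$, i.e. that $F/M \cong R/\m^n$ up to the free summand; this follows from $\nu_R(M) = n + r$ forcing the non-free part to have a single generator modulo $\m$-relations in the quotient, giving $\lambda_R(M/N) = n^2 - \binom{n+1}{2} = \binom{n}{2}$. I would present it by first establishing $e_R(M) = n^2$, then $\lambda_R(F/M) = \binom{n+1}{2}$ (via a direct examination of $M = (\m^n C :_F \det C)$ with $C$ diagonalized over the residue-field-extended ring), and subtracting.
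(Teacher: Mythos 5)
Your reduction to showing $\lambda_R(M/N)=e_R(M)-\lambda_R(F/M)$ (via Proposition \ref{amalgam}) and the translation of the hypothesis into $M=MV\cap F$ (via Theorem \ref{charcont}) are fine, but the structural claim on which the rest of your argument hinges is false. You propose to show that integral closedness plus $I(M)=\m^n$ forces $M$, after splitting off free summands, to be a single copy of $\m^n$ (equivalently, $F/M$ cyclic up to a free summand), with $e_R(M)=n^2$ and $\lambda_R(F/M)=\binom{n+1}{2}$. Take $M=\m\oplus\m\subseteq F=R^2$: it is integrally closed, has no free summand, and $I(M)=\m^2$, so $n=2$; yet $F/M\cong k\oplus k$ is not cyclic, $\lambda_R(F/M)=2\neq\binom{3}{2}$, and a direct Buchsbaum--Rim computation ($S_m(\m F)=\m^m S_m(F)$, so $\lambda_R(S_m(F)/S_m(\m F))=(m+1)\binom{m+1}{2}\sim 3\,m^3/3!$) gives $e_R(M)=3\neq n^2=4$. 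The sources of the error are the two auxiliary claims you lean on: Buchsbaum--Rim multiplicity is not additive over direct sums (already $e_R(\m\oplus\m)=3\neq e_R(\m)+e_R(\m)=2$), and a minimal reduction of $M$ (which has $\rk_R(M)+1$ generators) is not of the form (free module) $\oplus$ (two-generated reduction of $\m^n$). In addition, the diagonalization of $C$ is only available over the DVR $V$, not over $R$, so no direct sum decomposition $M\cong\bigoplus_i\m^{b_i}$ over $R$ follows from Smith normal form; and even granting such a decomposition, your own observation that $\sum_i\binom{b_i}{2}\neq\binom{n}{2}$ in general is precisely the symptom of the non-additivity of $e_R$, not evidence that all but one $b_i$ vanish. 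That your final numbers happen to give $\binom{n}{2}$ in the example is a cancellation of two errors, not a proof.

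For comparison, the paper avoids any computation of $e_R(M)$ and $\lambda_R(F/M)$ separately and instead argues by induction on $n$: after reducing to $M\subseteq\m F$, it maps a minimal resolution of $N$ to one of $M$, takes the mapping cone to get $0\to R\xrightarrow{X}R^n\xrightarrow{C}R^{n-1}\to M/N\to 0$, and studies $P=\operatorname{im}(C)\subseteq\m R^{n-1}$. It shows $P$ is a minimal reduction of $\overline{P}$, that $I(\overline{P})=\m^{n-1}$ (using $AX=\left[\begin{smallmatrix}\Delta\\ 0\end{smallmatrix}\right]$ and Zariski's product theorem), and that $\overline{P}=\m R^{n-1}$ (via Lemmas \ref{Clem} and \ref{compute} and an adjugate argument), whence $\lambda_R(M/N)=(n-1)+\binom{n-1}{2}=\binom{n}{2}$ by induction. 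If you want to salvage your approach, you would need a correct formula for $e_R$ of a module contracted from $V$, which is exactly Corollary \ref{local} --- and in the paper that is deduced from this proposition, so it cannot be assumed here.
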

\begin{proof}

We first reduce to the case
that $M \subseteq \m F$. Write $M = G \oplus \tilde{M}$ where $G$ is free and $\tilde{M}$ has no free direct summand. Taking double
duals gives $F = G \oplus \tilde{F}$, with the obvious notation. Now $\tilde{M} \subseteq \m \tilde{F}$ and is integrally closed. Further $I(\tilde{M}) = I(M) = \m^n$ and if $\tilde{N}$ is
a minimal reduction of $\tilde{M}$, then $G \oplus \tilde{N}$ is a minimal reduction
of $M$. Thus it would follow that
$$
\lambda_R(\frac{M}{N}) = \lambda_R(\frac{\tilde{M}}{\tilde{N}}) = \binom{n}{2},
$$
by the reduction. Hence we assume that $M \subseteq \m F$. Thus $n \geq rk_R(M) = r$, say. Note that $\nu_R(M) = r + n$.

The proof proceeds by induction on $n$. 
If $n=0$, then $r=0$, so $M=0$ and the result
is clear. If $n=1$ and $M \neq 0$, then $r=1$
and so $M = N = \m$ and here too the result
is easily verified.

Now suppose that $n > 1$. Extend a minimal generating set of $N$ (with $r+1$ elements)
to one of $M$ (with $r+n$ elements). Use these generating sets to construct minimal resolutions of $N$ and $M$ and extend the
inclusion map $N \hookrightarrow M$ to the
following map of complexes:
$$
\CD
0 @>>> R @> \Delta 
>>
R^{r+1} @>>> N @>>> 0\\
@. @V{X 
}
VV @V {\left[ \begin{matrix}
I \\ 0 \end{matrix} \right]}VV @V VV \\
0 @>>> R^n @>A>> R^{n+r} @>>> M @>>> 0.
\endCD
$$
Here $A$ is a $(n+r) \times n$ presenting matrix of $M$, $I$ is
the identity matrix of size $r+1$,  $0$ is the zero matrix of size $(n-1) \times (r+1)$ 
and $\Delta$ presents $N$ (so that the entries of $\Delta$ generate $I(N)$).

Standard homological algebra implies that
the mapping cone of the map of complexes
above resolves $M/N$. Explicitly, the following complex is exact:

$$
\CD
0 @>>>  R @>{\left( \begin{matrix} X \\ -{\Delta}\end{matrix} \right)}>> R^n \oplus R^{r+1} @>{\left( \begin{matrix}
B & I\\
C & 0
\end{matrix} \right)}>> R^{n+r} @>>> \frac{M}{N} @>>> 0,
\endCD
$$ 
where $B$ is the matrix formed by the first
$r+1$ rows of $A$ and $C$ is matrix formed
by the remaining rows of $A$.

We split off free direct summands to get the 
following minimal resolution:
$$
\CD
0 @>>> R @>
X>>
R^n @>
C>>
R^{n-1} @>>> \frac{M}{N} @>>> 0.
\endCD
$$
Let $P$ be the image of $C$, which is
a submodule of $\m R^{n-1}$ (since all
entries of $C$ are in $\m$) with
finite length quotient isomorphic to
$\frac{M}{N}$, and let $\overline{P}$ be
its integral closure (which is also a
submodule of $\m R^{n-1}$, since $\m R^{n-1}$ is integrally closed).
It follows from the above resolution that $I(P)$ is generated by the entries of $X$.

We claim that $P$ is a minimal reduction
of $\overline{P}$ and that $I(\overline{P}) = \m^{n-1}$. To see this, note that $rk_R(P)
= n-1$ while $\nu_R(P) = n = rk_R(P) + 1$
and so $P$ is indeed a minimal reduction of
$\overline{P}$. Next we have by Theorems \ref{basic} and \ref{Kdymain} that $I(\overline{P}) = \overline{I(P)}$.
Thus we need to see that $I(P)$ is a reduction of $\m^{n-1}$. Clearly, $I(P) \subseteq \m^{n-1}$ since it is the ideal
of maximal minors of $C$ which has all entries in $\m$. Also the map of complexes
above shows that $AX = \left[ \begin{array}{l} \Delta \\ 0 \end{array} \right]$ and
therefore that $I(N) \subseteq \m I(P)$ (since the entries of $\Delta$ generate $I(N)$ and those of $X$ generate $I(P)$).
Taking integral closures (and using Zariski's product theorem) we get $\m^n = 
I(M) = \overline{I(N)} \subseteq \m I(\overline{P})$. By the determinant trick,
$\m^{n-1} \subseteq \overline{I(\overline{P})} = \overline{I(P)}$. Thus, $I(P)$ is a reduction of $\m^{n-1}$ establishing the claim.
It now follows by induction on $n$ that
$\lambda_R(\frac{\overline{P}}{P}) = \binom{n-1}{2}$.

Next, we claim that $\overline{P} = \m R^{n-1}$. 
We already know that $\overline{P} \subseteq \m R^{n-1}$.
To see the opposite inclusion, first apply Theorem 
\ref{charcont} to see that $\overline{P}$
is contracted from $V$ and then by Lemmas
\ref{Clem} and \ref{compute} find $C \subseteq \overline{P}$
so that $(\m^{n-1}C :_{R^{n-1}} det(C)) = CV \cap R^{n-1}  = \overline{P}$.
Now note that 
$det(C)R^{n-1} \subseteq \m^{n-2}C$ since
$det(C) = C adj(C)$ and all entries of $
adj(C)$ are in $\m^{n-2}$, and therefore
$det(C)\m R^{n-1} \subseteq \m^{n-1}C$.
So $\m R^{n-1} \subseteq \overline{P}$,
as needed. Thus $\lambda_R(\frac{R^{n-1}}{\overline{P}}) = \lambda_R(\frac{R^{n-1}}{\m R^{n-1}}) = n-1$.

Finally, we have
$$ \lambda_R(\frac{M}{N}) = \lambda_R(\frac{R^{n-1}}{P}) = 
\lambda_R(\frac{R^{n-1}}{\overline{P}}) + \lambda_R(\frac{\overline{P}}{P}) = (n-1) + \binom {n-1}2 = \binom n2.$$
\end{proof}

An immediate corollary of Proposition \ref{lmmodn} is the following `local' version
of the multiplicity formula. Here, we use `local' in the sense of `valid for
modules contracted from the order valuation ring'.

\begin{corollary}\label{local} If $M = MV \cap F$ and $n = \ord (M)$,
then,
$$e_R(M) = \lambda_R(\frac{F}{M}) + \binom n2.$$
\end{corollary}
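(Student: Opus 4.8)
We must prove that if $M = MV\cap F$ and $n = \ord(M)$, then $e_R(M) = \lambda_R(F/M) + \binom{n}{2}$.

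**The plan.** The idea is to combine Proposition~\ref{amalgam} with Proposition~\ref{lmmodn}. First I would invoke Theorem~\ref{charcont}: since $M = MV\cap F$, the module $M$ is integrally closed and $I(M)$ is a power of $\m$, namely $I(M) = \m^n$ by Proposition~\ref{genlcase}. Next, let $N$ be a minimal reduction of $M$ (which exists, is $(\rk_R(M)+1)$-generated, and satisfies $N \subseteq M$). By Proposition~\ref{amalgam} we have $e_R(M) = \lambda_R(F/N)$. Now write $\lambda_R(F/N) = \lambda_R(F/M) + \lambda_R(M/N)$ using the inclusions $N \subseteq M \subseteq F$ together with finiteness of all the relevant lengths. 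Finally, apply Proposition~\ref{lmmodn}, whose hypotheses — $M$ integrally closed, $I(M) = \m^n$, and $N$ a minimal reduction of $M$ — are exactly what we have just verified; this gives $\lambda_R(M/N) = \binom{n}{2}$. Assembling the three equalities yields $e_R(M) = \lambda_R(F/M) + \binom{n}{2}$, as required.

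**The main obstacle.** There is essentially no obstacle: every ingredient has been established earlier in the paper, and the proof is a three-line assembly. The only points requiring a word of care are (i) checking that $N$ may be chosen inside $M$ — this is guaranteed by the theorem on minimal reductions, since a minimal generating set of a minimal reduction forms part of a minimal generating set of $M$, so in particular $N \subseteq M \subseteq F = M^{**}$ and $F$ is also the double dual of $N$, making $e_R(N)$ and $\lambda_R(F/N)$ meaningful; and (ii) confirming that $I(M) = \m^n$ rather than merely some power of $\m$, which is precisely the content of Proposition~\ref{genlcase} applied to the hypothesis $M = MV\cap F$. Both are immediate, so this corollary is genuinely a corollary.
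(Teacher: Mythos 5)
Your proof is correct and follows essentially the same route as the paper: choose a minimal reduction $N \subseteq M$, use Proposition~\ref{amalgam} to get $e_R(M) = \lambda_R(F/N)$, split the length as $\lambda_R(F/M) + \lambda_R(M/N)$, and apply Proposition~\ref{lmmodn} (whose hypotheses you rightly verify via Theorem~\ref{charcont} and Proposition~\ref{genlcase}). Your write-up merely makes explicit the checks the paper leaves implicit.
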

\begin{proof} If $N \subseteq M$ is a minimal reduction of $M$, then,
by Proposition \ref{amalgam}, $e_R(M) = \lambda_R(F/N)$. Now appeal to Proposition
\ref{lmmodn}.
\end{proof}

As an application of the analogue of the Hoskin-Deligne length formula for integrally closed modules, we now derive an interesting numerical
relationship between the multiplicities and colengths of an integrally
closed module $M$ and its ideal of minors $I(M)$.

\begin{corollary}\label{multformula} If $M$ is  integrally closed with ideal of minors $I(M)$,
then,
$$e_R(M) = e_R(I(M)) - \lambda_R(\frac{R}{I(M)}) + \lambda_R(\frac{F}{M}).$$
\end{corollary}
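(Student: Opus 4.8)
The plan is to combine the two analogues of the Hoskin--Deligne formula---the module one (Theorem \ref{hdm}) and the classical ideal one---with the ``local'' multiplicity formula of Corollary \ref{local}, summing everything over the quadratic transforms of $R$. First I would invoke Corollary \ref{brcor} to write $e_R(M) = \sum_T e_T(MV_T \cap (MT)^{**})[T:R]$, the sum over all quadratic transforms $T$ of $R$. Each module $N_T := MV_T \cap (MT)^{**}$ is, by construction, contracted from the order valuation ring of $T$, so Corollary \ref{local} applies to it over the base ring $T$: writing $n_T := \ord_T(N_T)$, we get $e_T(N_T) = \lambda_T\big((MT)^{**}/N_T\big) + \binom{n_T}{2}$. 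Substituting this into the displayed sum and splitting it in two, the first piece is exactly the right-hand side of Theorem \ref{hdm}, hence equals $\lambda_R(F/M)$, while the second piece is $\sum_T \binom{n_T}{2}[T:R]$.

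The next step is to identify $\sum_T \binom{n_T}{2}[T:R]$ with a quantity attached to the ideal $I(M)$. Here the key observation---already noted in the remark following Theorem \ref{hdm}---is that $(MT)^{**} = \m_R^{\,\ord_T(I(M)T)}T$, from which one reads off $\ord_T(N_T) = \ord_T\big((I(M)T)\big)$-type data; more precisely, since $I(M)$ is integrally closed (Theorem \ref{Kdymain}), the classical Hoskin--Deligne formula gives $\lambda_R(R/I(M)) = \sum_T \binom{\ord_T(I(M)^T)+1}{2}[T:R]$. I would match the exponent $n_T$ appearing above with $\ord_T(I(M)^T)$ and check, using that $N_T$ is the contraction to $(MT)^{**}$ of a module whose ideal of minors is $\m_T^{n_T}$, that $n_T = \ord_T(I(M)^T)$, so $\binom{n_T}{2} = \binom{\ord_T(I(M)^T)+1}{2} - \ord_T(I(M)^T)$. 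Meanwhile $\sum_T \ord_T(I(M)^T)[T:R]$ is (by a second, easier application of a Hoskin--Deligne--type identity, or directly) equal to $e_R(I(M)) - \lambda_R(R/I(M))$ after one uses Proposition \ref{amalgam} for the ideal $I(M)$: indeed for an $\m$-primary integrally closed ideal one has $e_R(I(M)) = \lambda_R(R/J)$ for a minimal reduction $J$, and $\lambda_R(J/I(M))$ is the ``$\binom{\ord}{1}$'' part of the formula. Assembling: $\sum_T \binom{n_T}{2}[T:R] = \lambda_R(R/I(M)) - \big(e_R(I(M)) - \lambda_R(R/I(M))\big)$ --- wait, more carefully, $\sum_T\big[\binom{\ord_T(I(M)^T)+1}{2} - \ord_T(I(M)^T)\big][T:R] = \lambda_R(R/I(M)) - \big(e_R(I(M)) - \lambda_R(R/I(M))\big)$, and rearranging gives $\sum_T\binom{n_T}{2}[T:R] = e_R(I(M)) - \lambda_R(R/I(M))$ once the bookkeeping with the extra $\lambda_R(R/I(M))$ terms is done; I would verify this identity $e_R(I(M)) - \lambda_R(R/I(M)) = \sum_T \binom{\ord_T(I(M)^T)}{2}[T:R]$ separately, as it is the ideal-theoretic shadow of Corollary \ref{local}.

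Putting the two pieces together yields $e_R(M) = \lambda_R(F/M) + \big(e_R(I(M)) - \lambda_R(R/I(M))\big)$, which is the claim. The main obstacle I anticipate is the middle step: correctly identifying the exponent $n_T = \ord_T(MV_T \cap (MT)^{**})$ with the ideal datum $\ord_T(I(M)^T)$ for every quadratic transform $T$, and then pinning down the precise relation between $\sum_T \binom{\ord_T(I(M)^T)}{2}[T:R]$ and $e_R(I(M)) - \lambda_R(R/I(M))$. The cleanest route to the latter is probably to apply Corollary \ref{local} (or rather its trivial ideal analogue, which is where the ``the ideal case of this being a triviality'' remark in the section intro bites) together with Proposition \ref{amalgam} to the ideal $I(M)$ directly, rather than re-deriving a Hoskin--Deligne sum; that sidesteps having to re-run the induction and reduces the corollary to algebra among the three already-established formulas.
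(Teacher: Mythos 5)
Your proposal is correct and follows essentially the same route as the paper: both express $e_R(M)$, $e_R(I(M))$, $\lambda_R(F/M)$ and $\lambda_R(R/I(M))$ as sums over quadratic transforms via Corollary \ref{brcor} and Theorem \ref{hdm}, and then reduce the identity, transform by transform, to Corollary \ref{local} applied to $MV_T \cap (MT)^{**}$, using the identification $\ord_T(MV_T \cap (MT)^{**}) = \ord_T(I(M)^T)$ (which the paper likewise leaves to ``a little thought''). Only note that your parenthetical claim $(MT)^{**} = \m_R^{\ord_T(I(M)T)}T$ holds only in the ideal case, as the remark after Theorem \ref{hdm} states, but your argument does not actually depend on it.
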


\begin{proof} Note that by Theorem \ref{Kdymain}, $I(M)$ is integrally closed. We have expressions for each term above in terms of modules
contracted from order valuations of various quadratic transforms of $R$. Explicitly, we have by Corollary \ref{brcor} applied to $M$ that,
$$
e_R(M) = \sum_T e_T(MV_T \cap (MT)^{**})[T:R],
$$
and by the same corollary applied to $I(M)$ that,
\begin{eqnarray*}
e_R(I(M)) &=& \sum_T e_T\left(I(M)V_T \cap (I(M)T)^{**}\right)[T:R]\\ &=& \sum_T e_T(\m_T^{ord_T(I(M)^T)})[T:R]\\ &=& \sum_T ord_T(I(M)^T)^2[T:R].
\end{eqnarray*}
Similarly, by Theorem \ref{hdm} applied to $M$ we have
\begin{equation*}
\lambda_R(\frac{F}{M}) = \sum_T \lambda_T(\frac{(MT)^{**}}{MV_T \cap 
(MT)^{**}})[T:R],
\end{equation*}
and by the same theorem applied to $I(M)$ that
\begin{eqnarray*}
\lambda_R(\frac{R}{I(M)}) &=& \sum_T \lambda_T\left(\frac{(I(M)T)^{**}}{I(M)V_T \cap 
(I(M)T)^{**}}\right)[T:R] \\ &=& \sum_T \lambda_T\left(\frac{T}{\m_T^{ord_T(I(M)^T)}}\right)[T:R]\\ &=& \sum_T \binom{ord_T(I(M)^T)+1}{2}[T:R].
\end{eqnarray*}

To finish the proof it therefore suffices to see that
\begin{eqnarray*}
e_T(MV_T \cap (MT)^{**}) &=& ord_T(I(M)^T)^2 - \binom{ord_T(I(M)^T)+1}{2} + \lambda_T\left(\frac{(MT)^{**}}{MV_T \cap 
(MT)^{**}}\right)\\
&=& \binom{ord_T(I(M)^T)}{2} + \lambda_T\left(\frac{(MT)^{**}}{MV_T \cap 
(MT)^{**}}\right)
\end{eqnarray*}
A little thought now shows that this equality is exactly the content
of Corollary \ref{local} applied to the $T$-module $MV_T \cap 
(MT)^{**}$.
\end{proof}


\begin{thebibliography}{1}



\bibitem[BchRim1964]{BchRim1964}  D. A. Buchsbaum and D. S. Rim,
 {\em A generalized Koszul complex II- Depth and Multiplicity},
 Tran. Amer. Math. Soc.,  111, 1964, 197-224.



 
 \bibitem[DbrLrd2002]{DbrLrd2002} R. Debremaeker and V. Van Lierde,
{\em A short proof of the length formula of Hoskin and Deligne}, 
 Archiv der Mathematik, Vol. 78, No. 5, 2002, 369-371.


\bibitem[Dlg1973]{Dlg1973} P. Deligne,
 {\em Intersections sur les surfaces regulieres},
 Groupes de Monodromie en G\'{e}om\'{e}trie Alg\'{e}brique (SGA7, II),
Lecture notes in Mathematics, No. 340, Springer-Verlag,
 1973,
 1-38.
 
 \bibitem[GrtDdn1971]{GrtDdn1971} A. Grothendieck and J. Dieudonne, {\em Elements de g\'{e}om\'{e}trie alg\'{e}brique. I}, Springer-Verlag, Berlin and New York, 1971.




\bibitem[Hsk1956]{Hsk1956}  M.A. Hoskin,
{\em Zero-dimensional valuations associated with plane curve
branches},
 Proc. London Math. Soc.(3) 6, 1956,
 70-99. 


\bibitem[Hnk1989]{Hnk1989} C. Huneke,
{\em Complete ideals in two-dimensional regular local rings},
 Commutative Algebra, Proceedings of a Microprogram, MSRI
publication no. 15, Springer-Verlag,
 1989,
 417-436.



\bibitem[JhnVrm1992]{JhnVrm1992}  B. Johnston and J. Verma,
 {\em On the length formula of Hoskin and Deligne and associated graded
rings of a two-dimensional regular local ring},
 Math. Proc. Cambridge Philos. Soc.
 111, 1992, 423-432.



\bibitem[Kdy1995]{Kdy1995}  V. Kodiyalam,
 {\em Integrally closed modules over two-dimensional regular local rings},
 Trans. Amer. Math. Soc., 347, 1995,
 3551-3573.



\bibitem[Kdy1993]{Kdy1993}  V. Kodiyalam,
 {\em Syzygies, Multiplicities and Birational Algebra},
 Ph. D. thesis, Purdue University,
 1993.


\bibitem[Lpm1987]{Lpm1987}  J. Lipman,
{\em On complete ideals in regular local rings},
 Algebraic Geometry and Commutative Algebra in honor of Masayoshi
Nagata,
 Academic Press,
 1987,
 203-321.




\bibitem[Mts1986]{Mts1986}  H. Matsumura,
 {\em Commutative Ring Theory},
 Cambridge University Press,
 1986.




\bibitem[Res1981]{Res1981}  D. Rees,
{\em Hilbert functions and pseudo-rational local rings of dimension two},
 J. London Math. Soc., 24, 1981,
 467-479.


\bibitem[Res1987]{Res1987} D. Rees,
{\em Reductions of modules},
 Proc. Cambridge Philos. Soc., 101, 1987,
 431-449.


\bibitem[Zrs1938]{Zrs1938}  O. Zariski,
 {\em Polynomial ideals defined by infinitely near base points},
 Amer. J. Math.,  60,  1938,
 151-204




\bibitem[ZrsSml1960]{ZrsSml1960}  O. Zariski and P. Samuel,
{\em Commutative Algebra, Vol. II},
 Van Nostrand Reinhold, New York,
 1960.

\end{thebibliography}
\end{document}